\documentclass[aos,preprint]{imsart}

\RequirePackage[OT1]{fontenc}
\RequirePackage{amsthm,amsmath}
\RequirePackage[round]{natbib}
\RequirePackage[colorlinks,citecolor=blue,urlcolor=blue]{hyperref}
\RequirePackage{graphicx,soul}
\usepackage{mathtools}

\usepackage{amsfonts}
\usepackage{multicol}

\usepackage[margin=1in]{geometry}

\arxiv{}

\startlocaldefs
\numberwithin{equation}{section}
\theoremstyle{plain}
\newtheorem{thm}{Theorem}[section]
\newtheorem{assumption}{Assumption}[section]
\newtheorem{lemma}{Lemma}[section]
\theoremstyle{definition}

\endlocaldefs

\newcommand\bbeta{{\boldsymbol\beta}}
\newcommand\bSig{{\boldsymbol \Sigma}}
\newcommand\vep{{\varepsilon}}
\newcommand{\Tau}{\mathrm{T}}

\newcommand{\bA}{{\bf A}}

\newcommand{\bb}{{\bf b}}

\newcommand{\bff}{{\bf f}}

\newcommand{\bh}{{\bf h}}
\newcommand{\bI}{{\bf I}}

\newcommand{\mcM}{\mathcal M}
\newcommand{\bT}{{\bf T}}
\newcommand{\bu}{{\bf u}}

\newcommand{\bX}{{\bf X}}

\newcommand{\bY}{{\bf Y}}

\newcommand\mbK{{\mathbb K}}
\newcommand\mbR{{\mathbb R}}

\newcommand\mcF{{\mathcal F}}
\newcommand\mcH{{\mathcal H}}

\newcommand\mcU{{\mathcal U}}

\DeclareMathOperator\E{E}
\DeclareMathOperator\Cov{Cov}

\DeclareMathOperator\Var{Var}
\DeclareMathOperator\vc{vec}

\begin{document}

\begin{frontmatter}
\title{Optimal Function-on-Scalar Regression \\ over Complex Domains}
\runtitle{Functional Regression}

\begin{aug}
\author{\fnms{Matthew} \snm{Reimherr}\thanksref{s1}
\ead[label=e1]{mreimherr@psu.edu}
\ead[label=u1,url]{www.personal.psu.edu/mlr36}}
\author{\fnms{Bharath} \snm{Sriperumbudur}\thanksref{s2}
\ead[label=e2]{bks18@psu.edu}
\ead[label=u2,url]{http://personal.psu.edu/bks18/}}
\and \\
\author{\fnms{Hyun Bin} \snm{Kang}
\ead[label=e3]{h.kang@wmich.edu}
\ead[label=u3,url]{https://wmich.edu/statistics/directory/kang}}

\thankstext{s1}{Supported by NSF DMS 1712826}
\thankstext{s2}{Supported by NSF DMS 1713011}
\runauthor{Reimherr, Sriperumbudur, and Kang}

\affiliation{Pennsylvania State University and Western Michigan University}

\address{Matthew Reimherr \\
Department of Statistics \\
Pennsylvania State University \\
\printead{e1}\\
\printead{u1}}

\address{Bharath Sriperumbudur \\
Department of Statistics \\
Pennsylvania State University \\
\printead{e2}\\
\printead{u2}}

\address{Hyun Bin Kang \\
Department of Statistics \\
Western Michigan University \\
\printead{e3}\\
\printead{u3}}

\end{aug}

\begin{abstract}
In this work we consider the problem of estimating function-on-scalar regression models when the functions are observed over multi-dimensional or manifold domains and with potentially multivariate output.  We establish the minimax rates of convergence and present an estimator based on reproducing kernel Hilbert spaces that achieves the minimax rate.  To better interpret the derived rates, we extend well-known links between RKHS and Sobolev spaces to the case where the domain is a compact Riemannian manifold.  This is accomplished using an interesting connection to Weyl's Law from partial differential equations.
We conclude with a numerical study and an application to 3D facial imaging.
\end{abstract}


\begin{keyword}
\kwd{Functional Data Analysis, Reproducing Kernel Hilbert Space, Functional Regression, Optimal Regression, Weyl's Law}
\end{keyword}

\end{frontmatter}

\doublespacing

\section{Introduction}
Functional data analysis has seen a precipitous development in recent decades, in terms of methodology, theory, and applications.  As with classical statistics, functional linear regression models are used extensively in practice.  In recent years, there has also been a surge in the development of so-called \textit{next generation functional data analysis}, which involves functional data with highly complex structures.  Much of this development has been spurred by advances in biomedical imaging, where dense measurements are taken over various tissues, including the brain, arteries, eyes, and faces    \cite[e.g.,][]{ettinger2016spatial,lila2016smooth,kang2017manifold,choe2017comparing,lee2018bayesian}.  In each of these examples, the measurements are taken over complex spatial domains such as $\mbR^3$ or two-dimensional manifolds.   


Establishing the optimality of parameter estimates in FDA remains an important topic given the complexity of the data and models involved.  Indeed, depending on the problem, one can see a wide variety of convergence rates.  For example, in univariate mean estimation it was shown that the rates depend on the smoothness of the underlying parameter as well as the sampling frequency of the data; depending on how often the functions are sampled, one can obtain a parametric convergence rate or nonparametric convergence rate \citep{cai2011optimal,li2010uniform,zhang2016sparse}.  In scalar-on-function regression, the rates relate both to the smoothness of the slope function and the regularity of the predictor function; these rates have been extended to nonlinear models as well \citep{hall2007methodology,cai2012minimax,wang2015optimal,reimherr2017optimal,sun2017optimal}.  In high-dimensional function-on-scalar regression models it was shown that the convergence rates match the classic scalar outcome setting as long as the sampling is dense enough \citep{barber2017function,fan2017high}. In principal component estimation, one obtains rates that reflect how deep into the spectrum one wishes to estimate as well as how spread out the eigenvalues are \citep{dauxois1982asymptotic,jirak2016optimal,petrovich2017asymptotic}.  In each of these cases, different rates can be obtained depending on the regularity of the problem.     
However, optimality of function-on-scalar regression, especially with more complex domains and sampling schemes, has not yet been established.  Such results are critical given the recent developments of functional data methods involving manifolds \citep{kang2017manifold,dai2018principal,lin2018intrinsic}.  In this work we address this issue by: (1) establishing minimax lower bounds on the estimation rate (2) providing a minimax optimal estimator whose upper bounds match the developed lower bounds and (3) interpreting the rate via a new connection to Sobolev spaces over manifold domains.  

We develop our theory under a fairly general structure:
\begin{align}
Y_{ij\ell} 
= Y_{i\ell}(u_{ij}) + \delta_{ij\ell}
=  \sum_{p=1}^P X_{ip} \beta_{\ell p}(u_{ij}) + \vep_{i\ell}(u_{ij}) + \delta_{ij\ell}, \label{e:model}
\end{align}
for $i=1,\dots,n$, $j=1,\dots, m_i,$ and $\ell=1,\dots,L.$  Here $i$ indexes the subject, $j$ the observed domain point, and $\ell$ the coordinates of the functional outcomes.  
Intuitively, this means that for each subject we have $L$ functional outcomes, $Y_{i\ell}(u) \in \mbR$, that are only observed at domain points $u_{ij} \in \mcU$.  The domain $\mcU$ is most commonly the interval $[0,1]$, but it may also be a more complex manifold, both of which are included in our theory.  For example, in \cite{ettinger2016spatial} $\mcU$ represents the surface of an internal carotid artery, meaning that it is a two dimensional manifold sitting in a three dimensional space and $L=1$.  In \cite{kang2017manifold} they consider the shape of human faces, their framework results in $\mcU$ being a two dimensional manifold while $L=3$ since the face is measured in three dimensions.

The intrinsic dimension of $\mcU$ plays a critical role in the minimax estimation rates for $\beta_{\ell k}(u)$, while, interestingly, the value $L$ does not. In addition, it was previously thought that, in simpler settings, such as mean estimation, it was necessary to control the smoothness of the underlying functions $Y_{i \ell}(u)$ \citep{cai2012minimax}, or equivalently the errors $\vep_{ij}(u)$, however, we show that this is actually unnecessary and establish all of our results under the mild assumption that $\sup_{u \in \mcU}\Var(\vep_{i\ell}(u)) < \infty$, that is, the point-wise variance of the errors is bounded.      

We assume that $\beta_{\ell p}$ (for all $\ell,p)$ lie in a reproducing kernel Hilbert space (RKHS), and establish our rates relative to the rate of decay of the eigenvalues of the kernel defining the RKHS.  In contrast, \cite{cai2012minimax} develop theory for one dimensional mean estimation assuming the parameters lie in a particular Sobolev space, which will be included in our theory as a special example.  Under mild assumptions, we will show that the optimal rate of convergence is given by {\bf (HB: I don't think we've defined $m$ before this -- just had $m_i$)}
\[
O_P\left( (n m)^{-\frac{2h}{2h+1}} + n^{-1} \right),
\]
where $h$ is related to the kernel of the RKHS.  In Section \ref{s:hrate}, we consider the case where $\mcU$ is a compact $d$-dimensional Riemannian manifold. When the parameters $\beta_{\ell k}$ posses $r$ derivatives, we use a connection with Weyl's law to show that $h = r/d$, which extends well known results for Sobolev spaces on $\mathbb{R}^d$ \citep{Edmunds-96}
resulting in the rate
\[
O_P\left( (n m)^{-\frac{2r}{2r+d}} + n^{-1} \right),
\]
which clearly shows the effect of the intrinsic dimension of $\mcU$ on the convergence rates of our estimators, with higher dimensions leading to slower rates. This further highlights the utility in exploiting manifold structures that may reside in higher dimensional spaces; the convergence rate is tied only to the intrinsic dimension of the manifold and not to that of the ambient space.  

The remainder of the paper is organized as follows.  In section \ref{s:background} we provide an overview of the modelling assumptions and necessary mathematical tools.  In Section \ref{s:est} we define our estimation procedure and provide a formulation useful for establishing mathematical properties.  In Section \ref{s:theo} we collect our theoretical contributions, which constitute the primary novel contributions of the paper.  There we provide a general lower bound on the minimax rate, followed by a theorem showing that our proposed estimator achieves the optimal rate.  We conclude the section with discussion on the derived rate.  We provide a new connection between the eigenvalues of an RKHS and Sobolev spaces over manifold domains, which allow us to interpret our results in terms of the dimension of the domain and the smoothness of the parameters being estimated.  We conclude the paper with numerical work in Section \ref{s:num}, where we provide simulations that further articulate the rates seen in Section \ref{s:theo}.  We also provide an application to 3D facial imaging from anthropology, highlighting the utility of such tools in biomedical imaging.

\section{Background and Modeling Assumptions} \label{s:background}
Here we provide the necessary background as well as a clear outline of our modeling assumptions.
\subsection{Reproducing Kernel Hilbert Spaces}
RKHSs provide a variety of benefits for functional data analysis.  The first is that the kernel can be tailored to reflect certain beliefs or assumptions about the parameters, e.g. smoothness or periodicity.  The second is that the eigenfunctions of the kernel can be used as a basis for approximating functional observations and/or parameter estimates, though the reproducing property can also be used to obtain parameter estimates.  Lastly, commonly used spaces, such as Sobolev spaces, as well as estimation techniques such smoothing splines can naturally be viewed in an RKHS framework \citep{wahba1990spline,berlinet2011reproducing}.   

We assume throughout that $\mcU$ is a compact 
$d$-dimensional manifold with $d < \infty$, i.e., $\mcU$ is a second countable compact Hausdorff space such that each point $u \in \mcU$ is contained in an open set that is homeomorphic to an open set in $\mbR^d$. 
We assume that $\mcU$ is equipped with a countably additive measure, $\mu$, with respect to the Borel $\sigma$-algebra, whose support is $\mcU$ and satisfies $\mu(\mcU)=1$.  This means that we can define integrals over $\mcU$ and the space, $L^2(\mcU, \mu)$, of square integrable functions over $\mcU$ is equipped with the inner product
\[
\langle f, g \rangle = \int_{\mcU} f(u) g(u) \ d \mu(u).   
\]
Throughout, for notational simplicity, we will often write $L^2$ for $L^2(\mcU,\mu)$.  A kernel function, $K:\mcU \times \mcU \to \mbR^+$, is a bivariate function that is symmetric, positive definite, and continuous (though this can be relaxed).  There is a one-to-one correspondence between RKHSs and kernel functions.  One can generate the RKHS from $K$ in at least one of two ways, though for our purposes one in particular is especially useful \citep[Section 3.2]{berlinet2011reproducing}.  Note that any norm $\| \cdot\|$ or inner product $\langle \cdot, \cdot \rangle$ written without subscript is understood to be with respect to $L^2$.  By Mercer's theorem we can write
\[
K(u, s) = \sum_{k=1}^\infty \tau_k v_k(u)v_k(s),
\]
where $v_k \in L^2$ are orthonormal and $\{\tau_k\}$ is a positive, non-increasing, summable sequence, with the convergence holding in an absolute and uniform sense.  One can then obtain $\mbK$ as a subset of $L^2$:
\[
\mbK = \left\{
f \in L^2: \sum_{k=1}^\infty \frac{\langle f, v_k \rangle ^2 }{\tau_k}
 < \infty \right\}.
\]
Then $\mbK$ is an RKHS when equipped with the inner product $\langle f, g \rangle_{\mbK} = \sum_k \tau_k^{-1} \langle f, v_k \rangle \langle g, v_k \rangle$.  On a technical note, since $L^2$ is a set of equivalence classes one is implicitly taking $f \in \mbK$ to be the unique member of each class that is continuous.  This view is especially useful as it emphasizes how quickly the coordinates of $f$ must decay when expressed in the $\{v_j\}$ basis. This decay turns out to be critical for understanding and developing minimax rates.

\subsection{Modeling Assumptions}
We now state our modeling assumptions. We provide a summary at the end of this section for ease of reference. We begin with the linear relationship
\[
Y_{i j \ell} = \sum_{p=1}^P X_{ip} \beta_{\ell p}(u_{ij}) + \vep_{i \ell}(u_{ij}) + \delta_{ij\ell}.
\]
This represents the model for the underlying trajectories/surfaces, which are not completely observed, as we will discuss shortly. The parameters, $\beta_{\ell k}$ are assumed to lie within $\mbK$.  Regularity assumptions about $\beta_{\ell k}$ are introduced by making assumptions about $\mbK$, especially the rate at which the eigenvalues of $K$ converge to zero.  

Unlike in \cite{cai2012minimax}, we make only minimal assumptions about the regularity of $\vep_{i \ell}(u)$.  In particular, we establish our minimax rates under the mild assumption that the point-wise variance of the errors is bounded, $\sup_{u \in \mcU} \Var(\vep_{i\ell}(u)) < \infty$, which implies (and is only slightly stronger than) $\E\| \vep_{i \ell}\|^2 < \infty$.  In \cite{cai2012minimax} a much stronger assumption that $\E \| \vep_{i \ell}\|_{\mbK}^2 < \infty$ was made, which, by the reproducing property implies our assumption.  While seemingly innocent, this is an incredibly strong assumption that would actually preclude achieving optimal convergence rates in most settings.  Practically, the data is usually much rougher than the underlying mean parameters. However, requiring that they reside in the same space implies that the $\beta_{\ell p}$ can only be smoothed up to the smoothness of the data.  For example, if $\mcU = [0,1]$ and $\beta_{\ell p}$ possessed two derivatives, while $\vep_{i \ell}$ only possessed one, then the rate given by \cite{cai2012minimax} would be $(nm)^{-2/3} + n^{-1}$, however, as we will show, this rate can be improved to $(nm)^{-4/5} + n^{-1}$. 
Furthermore, in settings such as finance or geosciences, $\vep_{i \ell}$ might not possess any derivatives or be part of any RKHS (e.g. Brownian motion or Ornstein-Uhlenbeck process).   

We will treat $X_{ij}$ as deterministic.  The observed points $u_{ij}$ will be assumed to be iid draws from $\mcU$, with density (w.r.t.~$\mu$) that is bounded away from 0 and $\infty$.  We also assume that functional outcome is observed with error, namely $Y_{ij\ell} =  Y_{i \ell}(u_{ij}) + \delta_{ij \ell}$.  The error $\delta_{ij \ell}$ are assumed to be iid across $i$ and $j$, though they can be dependent in $\ell$.  We assume these errors are centered and have finite variance.  We now summarize all of the assumptions introduced in this section.

\begin{assumption}\label{a:main}
	We make the following modeling assumptions.
	\begin{enumerate}
		\item The observed data are $\{Y_{ij\ell}, u_{ij}, X_{i1},\dots, X_{iP}\}$ for $i=1,\dots,n$, $j=1,\dots,m_i$, and $\ell=1,\dots,L$.  
		\item The observed times/locations, $u_{ij}$, are iid elements of $\mcU$, a compact $d$-dimensionl manifold.  The space $\mcU$ is equipped with a countably additive measure $\mu$ (over the Borel $\sigma$-field) with $\mu (\mcU)=1$.  The random elements $u_{ij}$ are assumed to have a density (w.r.t.~$\mu$) which is bounded above and below (from 0).  
		\item The observed data satisfy the linear model
		\[
		Y_{i j \ell} = \sum_{p=1}^P X_{ip} \beta_{\ell p}(u_{ij}) + \vep_{i \ell}(u_{ij}) + \delta_{ij \ell}.
		\]
		\item The mean parameters reside within the RKHS, i.e.,  $\beta_{\ell p} \in \mbK$, with continuous kernel $K(u,s)$.  The eigenvalues of $K$ satisfy $\tau_k \asymp k^{-2h}$ for $h \geq 1$.  
		\item The sequences  $\vep_{i \ell} \in L^2$, $u_{ij} \in \mcU$, and $\delta_{ij \ell}$ are random and independent of each other.
		\item The covariates $X_{ip}$ are deterministic.  Define $\Sigma_X = n^{-1} \sum X_i X_i^\top$ and assume that smallest and largest eigenvalues are bounded away from $0$ for all large $n$:  $0 < \nu^{-1} \leq \sigma_{min}(\Sigma_X) \leq \sigma_{max}(\Sigma_X)  \leq \nu < \infty $.
		\item Assume that the predictors are bounded $|X_{ip}| \leq \zeta  < \infty$. 
		\item The $\delta_{ij \ell}$ represent the measurement error and are iid across $i$ and $j$, though potentially dependent across $\ell$.  They have mean zero and finite variance, $\Var \delta_{ij \ell} \leq M_\delta < \infty$, for some fixed $M_\delta \in \mbR$.
		\item The stochastic processes $\vep_{i \ell}$ are iid across $i$, though potentially dependent across $\ell$.  They are assumed to have mean zero and to satisfy $\sup_{u \in \mcU} \Var(\vep_{i \ell}(u)) \leq M_{\epsilon} < \infty$, for some fixed $M_{\epsilon} \in \mbR.$  
	\end{enumerate}
\end{assumption}

\section{Estimation Methodology} \label{s:est}
We assemble an estimate of each $\ell$ coordinate separately.
Define the $l$th target function as
\begin{align*}
\ell_{mn}^l (\bb) & = \frac{1}{n} \sum_{i=1}^n \frac{1}{m_i} \sum_{j=1}^{m_i} (Y_{ij l} - \bX_i^\top \bb (u_{ij}))^2 + \lambda \sum_{k=1}^p \| b_{k}\|^2_{\mbK} \\
& =\frac{1}{n} \sum_{i=1}^n \frac{1}{m_i} \sum_{j=1}^{m_i} (Y_{ijl} - \langle \bX_i^\top \bb, K_{u_{ij}} \rangle_\mbK)^2 + \lambda \| \bb\|^2_{\mbK},
\end{align*}
where $b_k\in \mbK$ and $\bb=(b_1,\dots,b_P)$ are the generic arguments of the target function and $\bX_i = (X_{i1},\dots, X_{iP})$ are the covariates for the $ith$ unit  .  
The minimizer $\hat{\bbeta}_l$, can be obtained in a closed form using operator notation (as opposed to the representer theorem).  We can take the derivative with respect to $\bb$ (in the $\mbK$ topology) as
\[
D \ell_{mn}^l(\bb) = \frac{1}{n} \sum_{i=1}^n \frac{1}{m_i}\sum_{j=1}^{m_i} -2(Y_{ijl} - \langle K_{u_{ij}}, \bX_i^\top \bb \rangle_{\mbK})K_{u_{ij}} \bX_i+ 2 \lambda \bb,
\]l
where $K_{u_{ij}}(u):=K(u_{ij},u)$.  Define $\bh_{nml} \in \mbK^{p}$ as 
\begin{align}
\bh_{nml} = \frac{1}{n} \sum_{i=1}^n \frac{1}{m_i}\sum_{j=1}^{m_i} Y_{ijl} K_{u_{ij}} \bX_i , \label{e:hnm}
\end{align}
and the linear operator $\bT_{nm}: \mbK^p \to \mbK^p$ as
\begin{align}
\bT_{nm}(\bff) = \frac{1}{n} \sum_{i=1}^n \frac{1}{m_i} \sum_{j=1}^{m_i} \bX_i \bX_i^\top \bff(u_{ij}) K_{u_{ij}}
= \frac{1}{n} \sum_{i=1}^n \frac{1}{m_i} \sum_{j=1}^{m_i} \langle \bX_i^\top \bff, K_{u_{ij}} \rangle_{\mbK} K_{u_{ij}} \bX_i. \label{e:Tnm}
\end{align}
Setting the derivative equal to zero we get the operator form for the estimator
\[
D \ell^l_{mn}(\bb) = - 2 \bh_{nm} + 2 \bT_{mn}\bb  + 2 \lambda  \bb 
= 0
\Longrightarrow \hat \bbeta_l = (\bT_{nm} + \lambda \bI)^{-1} \bh_{nml}.
\]
This operator form for $\hat\bbeta_l$ is convenient for asymptotic theory.  In Section \ref{s:comp} we will discuss an efficient computational approximation to $\hat \bbeta_l$.  Using the representer theorem for RKHS, it is also possible to obtain an alternative equivalent formulation for $\hat \bbeta_l$ that can be computed exactly, but requires solving large systems of linear equations that can make it impractical for larger datasets.

\section{Theoretical Results} \label{s:theo}
We now provide our key theoretical results.  The first is a lower bound on the best possible estimation rate.  This bound is obtained using an application of Fano's lemma.  Second, we provide an estimator whose upper bound matches the lower bound, implying that it is optimal in a minimax sense.  Lastly, we provide an interpretation of the resulting rate by making a connection to Sobolev spaces with domains consisting of compact Riemannian manifolds.  

\subsection{Lower Bound}
Recall that when referring to a minimax rate, we have to specify the loss function as well as the class of models we are considering.  Here, our loss is based on the $L^2(\mcU,\mu)$ norm, and we consider all models as outlined in Assumption \ref{a:main}. 
A more delicate point is that we should also specify which quantities in the problem are ``fixed", that is, which quantities should be treated as fixed when constructing the scenario that achieves the desired lower bound.  This is important since our problem is regression and we are treating the predictors as fixed.  So consider $\mcM$ to be the collection of all possible distributions for $\{Y_{ ij \ell} \}$ for a fixed set of predictors $\{X_{ik}\}$ and fixed $m_i$ (though the $m_i$ are still allowed vary with $n$) satisfying Assumption \ref{a:main}.
We also assume that the parameters of interest lie in a closed bounded ball of $\mbK$: $\| \beta_{\ell p}\|_{\mbK} \leq M_0$, which will be denoted as $B_{\mbK}$.  So each $M \in \mcM$ indicates the distributions for $(\epsilon_{i\ell}, \delta_{ij\ell}, u_{ij})$ and specifies the values of $\beta_{\ell p}$.  

Define the excess risk:
\[
R_n= \sum_{p=1}^P \sum_{\ell=1}^L \| \hat \beta_{\ell p} - \beta_{\ell p}\|^2.
\]
We say that the rate of convergence of $\hat \beta$ is $a_n$ if $R_n = O_P(a_n)$.  The minimax estimation risk is then defined as the optimal rate of convergence (i.e. the smallest $a_n$), across all possible estimators, in the worst case modeling scenario.  
\begin{thm}\label{t:lower}
Let $\mcM$, as described above, be the collection of probability models satisfying Assumption \ref{a:main} with $\|\beta_{\ell p}\|_{\mbK} \leq M_0 < \infty$.  Then for any $\hat \beta$ which is a function of the data, the excess risk satisfies
\[
\limsup_{n \to \infty} \sup_{M \in \mcM}  P( R_n \leq \epsilon ((nm)^{-2h/(2h+1)} + n^{-1} )) \to 0
\qquad \text{as } \epsilon \to 0,
\]
if the arithmetic and harmonic means of the $m_i$ are asymptotically equivalent and the eigenvalues, $\tau_k$, of $K$, decay as $\tau_k \asymp k^{-2h}$. 
\end{thm}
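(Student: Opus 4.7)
The plan is to prove the two terms $(nm)^{-2h/(2h+1)}$ and $n^{-1}$ in the lower bound separately, by constructing two families of hard parameter configurations and applying Fano's inequality to each. Since the excess risk sums over all $(\ell,p)$, it suffices to lower bound the risk for a single coordinate, say $\beta_{11}$, while holding the other $\beta_{\ell p}$ at zero. I would also fix a simple design: take $P=L=1$ with $X_{i1}\equiv 1$, set $\delta_{ij}$ to be iid Gaussian with variance $M_\delta$, take $\vep_{i\ell}$ to be a centered Gaussian process with $\sup_u \Var(\vep_i(u))\leq M_\vep$, and let $u_{ij}$ be iid from $\mu$. Under this reduction the likelihood factorizes across $i$, with the $i$-th factor being a multivariate Gaussian of dimension $m_i$ whose mean vector is $\beta(\mathbf{u}_i)$ and whose covariance is $M_\vep\mathbf{1}\mathbf{1}^\top + M_\delta \bI + C_\vep$; this makes all KL divergences tractable.

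For the nonparametric term I would use the eigen-expansion of $K$ and consider the packing set
\[
\beta_\xi = \rho\sum_{k=N+1}^{2N} \xi_k\, v_k,\qquad \xi\in\{-1,+1\}^N,
\]
with $\rho$ and $N$ to be chosen. The RKHS constraint $\|\beta_\xi\|_\mbK^2 = \rho^2\sum \tau_k^{-1} \lesssim \rho^2 N^{2h+1}$ forces $\rho^2\lesssim N^{-(2h+1)}$. By Varshamov--Gilbert there is a subset $\Xi\subset\{-1,+1\}^N$ of cardinality $e^{cN}$ with pairwise Hamming distance at least $N/8$, hence $\|\beta_\xi-\beta_{\xi'}\|^2 \gtrsim \rho^2 N \asymp N^{-2h}$. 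A direct Gaussian KL computation, using that the covariance of the $i$-th block is bounded below (in Loewner order) by a constant times $\bI$, yields
\[
\mathrm{KL}(P_\xi\,\|\,P_{\xi'}) \lesssim \sum_i m_i\,\|\beta_\xi-\beta_{\xi'}\|_{L^2(\mu)}^2 \lesssim nm\,\rho^2 N.
\]
Choosing $N\asymp (nm)^{1/(2h+1)}$ makes the KL linear in $\log|\Xi|$, and Fano's inequality then produces the claimed $(nm)^{-2h/(2h+1)}$ floor. The hypothesis that the arithmetic and harmonic means of $\{m_i\}$ are asymptotically equivalent enters precisely here: it is what lets me replace $\sum m_i$ by $nm$ in both the KL bound and the final rate.

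For the $n^{-1}$ term, which is truly driven by the subject-level noise $\vep_i$ rather than the sampling density, I would drop one high-frequency direction, taking $\beta_0 = 0$ and $\beta_1 = c n^{-1/2} v_{k_0}$ for some fixed $k_0$ and small $c$. The two functions are clearly in $B_\mbK$ (with $c$ small) and are separated in $L^2$ at rate $n^{-1/2}$. The Gaussian KL per subject is now bounded by $\beta_1^\top \Sigma_i^{-1}\beta_1$ evaluated at the $m_i$ sampled locations, but because $\Sigma_i \succeq M_\vep\mathbf{1}\mathbf{1}^\top$ the information contributed by any one subject about the constant-direction (and, via an analogous Woodbury-type bound, any smooth direction) is bounded independently of $m_i$; summing over $i$ gives total KL $O(c^2)$, and a two-point Le~Cam argument yields the $n^{-1}$ floor. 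Combining the two bounds gives the theorem.

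The main obstacle I anticipate is the second step: making the KL bound truly independent of $m_i$ for the $n^{-1}$ term requires using the positive-definite structure of $\Cov(\vep_i)$ rather than treating the within-subject errors as independent. This is where the distinction from the \cite{cai2012minimax} setup really matters, because the $\vep_i$ need not lie in $\mbK$ and can in principle be rougher than $\beta$, so one must be careful to choose a worst-case covariance (e.g.\ $M_\vep \mathbf{1}\mathbf{1}^\top$, corresponding to a random intercept) that saturates the bound $\sup_u\Var\vep_i(u)\leq M_\vep$ and produces the correct pre-constant.
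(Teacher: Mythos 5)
Your overall strategy — reduce to a single coordinate, Gaussianize the noise, build a Varshamov--Gilbert packing in the eigenbasis for the nonparametric floor, and use a separate argument exploiting the random-intercept covariance to get the parametric floor — matches the paper's proof closely, and the role you assign to the arithmetic-vs-harmonic mean hypothesis is also the right one. The first half (the $(nm)^{-2h/(2h+1)}$ term) is essentially the paper's argument in $\{-1,+1\}^N$ rather than $\{0,1\}^N$ coordinates and is fine; the only small slip is that you take $X_{i1}\equiv 1$, whereas the paper works with fixed but arbitrary bounded $X_i$ (only collecting a $\zeta^2$ factor), since the design is part of the problem and not ours to choose.

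There is, however, a concrete gap in your $n^{-1}$ step. You take $\beta_1=c\,n^{-1/2}v_{k_0}$ for a fixed $k_0$, give $\vep_i$ the random-intercept covariance $M_\vep\mathbf{1}\mathbf{1}^\top$, and then assert that ``via an analogous Woodbury-type bound, any smooth direction'' contributes $O(1)$ per subject to the KL divergence. That assertion is false: Sherman--Morrison shows that $(M_\delta\bI+M_\vep\mathbf{1}\mathbf{1}^\top)^{-1}=M_\delta^{-1}\bigl(\bI-\tfrac{M_\vep}{M_\delta+m_iM_\vep}\mathbf{1}\mathbf{1}^\top\bigr)$, so the rank-one perturbation only damps the $\mathbf{1}$-direction. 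If $v_{k_0}$ is not (approximately) constant, then $\mathbf{1}^\top v_{k_0}(\mathbf{u}_i)\approx 0$ while $\|v_{k_0}(\mathbf{u}_i)\|^2\approx m_i$, so the per-subject Fisher information about $\beta_1$ grows like $m_i/M_\delta$ rather than staying bounded, and your two-point KL is $\asymp c^2 n^{-1}\sum_i m_i$, not $\asymp c^2$. Also note that $\Sigma_i\succeq M_\vep\mathbf{1}\mathbf{1}^\top$ gives no upper control on $\Sigma_i^{-1}$ because the right-hand side is singular, so that Loewner inequality cannot be used as you suggest. The paper avoids this by taking $\beta_1$ to be a \emph{constant} function, so that $\beta_1(\mathbf{u}_i)$ is exactly proportional to $\mathbf{1}$, which is an eigenvector of $\bI+\mathbf{1}\mathbf{1}^\top$ with eigenvalue $m_i+1$, yielding the bound $\mathbf{1}^\top\Sigma_i^{-1}\mathbf{1}=m_i/(m_i+1)\le 1$. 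The fix for your version is either to pick $k_0$ to be the constant direction (if the constant lies in $\mbK$), or to choose $\Cov(\vep_i(u),\vep_i(u'))\propto v_{k_0}(u)v_{k_0}(u')$ so that the random-effect direction is aligned with the $\beta$ perturbation you are testing; the current covariance choice does not shield the direction you are using.
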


The proof of Theorem \ref{t:lower} is given in the appendix.  It shows that no estimator can achieve a ``worst case" rate faster than $(nm)^{-2h/(2h+1)} + n^{-1}$; we will show in the next section that this bound is tight by giving an estimator that achieves the lower bound.  The proof is based on an application of Fano's lemma.  We show that a sequence of parameters within the ball $B_{\mbK}$ can be selected which are sufficiently far apart with respect to the $\mbK$-norm. We then prove a bound on the Kullback-Leibler divergence between any pair of probability measures induced by this collection of parameters. Combining these two bounds, we are able to apply Fano's lemma to obtain the desired result.   

One interesting caveat to Theorem \ref{t:lower} is the requirement that the arithmetic and harmonic means of the $m_i$ be asymptotically equivalent.  This is not simply a theoretical convenience as a case where this doesn't hold becomes surprisingly delicate. For example, suppose that one of the $m_i$ was essentially infinite (implying the entire curve is observed).  Then the arithmetic mean would be infinite, but the convergence rate need not be parametric.  Alternatively, if even of a small fraction of the $m_i$ were infinite (or very large), then the rate would become parametric, however the harmonic mean need not be infinite especially if the remaining $m_i$ are small.  If one were to let the fraction of $m_i$ being infinite (or very large) change with $n$, then one could obtain basically any convergence rate desired (between nonparametric and parametric), all while maintaining a bounded harmonic mean and an infinite arithmetic mean.  To avoid this, the lower bound given in \cite{cai2012minimax} was also taken over all $m_i$ that satisfy a specific harmonic mean, however this is somewhat strange since the $m_i$ are actually observed in a given problem.  Recently \cite{zhang2018optimal} discussed choosing optimal weights in place of $1$ or $1/m_i$ in the context of mean and covariance function estimation.  However, the weights were chosen to optimize the asymptotic upper bound of a local linear smoother and depended on the choice of the smoothing parameter.  

\subsection{Upper Bound} \label{s:up}
Recall that our proposed estimator is given by
\begin{align}
\hat \bbeta_l = (\bT_{nm} + \lambda \bI)^{-1} \bh_{nml}. \label{e:estimate}
\end{align}
We first give a more general result that provides a deeper understanding of the components of the convergence rate.  
\begin{thm}\label{t:upper}
Assume that \ref{a:main} holds and that $\hat \bbeta$ is as given in \eqref{e:estimate}.  If $\lambda$ is such that $nm \lambda^{\delta +  1/2h} \to \infty $ for some $\delta > 1/2h$, then the excess risk satisfies
\[
R_n = O_P(1) \left[ \lambda  + \frac{1}{\lambda^{1/2h} nm} + \frac{1}{n} 
\right].
\]
\end{thm}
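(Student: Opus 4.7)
The plan is to decompose $\hat\bbeta_l-\bbeta_l$ into a bias and a stochastic part and bound each in the $L^2$ norm.  Substituting the model for $Y_{ijl}$ into the definition of $\bh_{nml}$ gives $\bh_{nml}=\bT_{nm}\bbeta_l+\bh^{\vep}_{nml}+\bh^{\delta}_{nml}$ with
\[
\bh^{\vep}_{nml}=\frac{1}{n}\sum_i\frac{1}{m_i}\sum_j\vep_{il}(u_{ij})K_{u_{ij}}\bX_i,\qquad \bh^{\delta}_{nml}=\frac{1}{n}\sum_i\frac{1}{m_i}\sum_j\delta_{ijl}K_{u_{ij}}\bX_i,
\]
so $\hat\bbeta_l-\bbeta_l=-\lambda(\bT_{nm}+\lambda\bI)^{-1}\bbeta_l+(\bT_{nm}+\lambda\bI)^{-1}(\bh^{\vep}_{nml}+\bh^{\delta}_{nml})$.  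My first step is to compare $\bT_{nm}$ with its population counterpart $\bT=\Sigma_X\otimes T$, where $T\colon\mbK\to\mbK$ is the integral operator induced by $K$ under the sampling density.  Because that density is bounded above and below by Assumption~\ref{a:main}, $T$ retains eigenvalues $\tau_k\asymp k^{-2h}$, and a Bernstein-type bound for Hilbert--Schmidt operator-valued averages gives $\|(\bT+\lambda\bI)^{-1/2}(\bT_{nm}-\bT)(\bT+\lambda\bI)^{-1/2}\|=o_P(1)$ whenever $nm\lambda^{\delta+1/2h}\to\infty$.  This is what permits replacing $(\bT_{nm}+\lambda\bI)^{-1}$ by $(\bT+\lambda\bI)^{-1}$ throughout, up to a multiplicative constant.

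For the bias, expanding $\bT$ in its eigenbasis $\{\bu_j\otimes v_k\}$ with eigenvalues $\{\sigma_j\tau_k\}$ and using $\tau_k/(\sigma_j\tau_k+\lambda)^2\leq 1/(4\sigma_j\lambda)$, together with $\sigma_{\min}(\Sigma_X)\geq\nu^{-1}$ and $\|\bbeta_l\|_{\mbK}\leq M_0$, yields $\|\lambda(\bT+\lambda\bI)^{-1}\bbeta_l\|^2_{L^2}\leq C\lambda$.  For the measurement-error part, conditioning on the $u_{ij}$, using independence of the $\delta_{ijl}$, and invoking $\Var(\delta_{ijl})\leq M_\delta$ gives
\[
\E\,\|(\bT+\lambda\bI)^{-1}\bh^{\delta}_{nml}\|^2_{L^2}\leq\frac{C}{nm}\sum_k\frac{\tau_k^2}{(\tau_k+\lambda)^2}\asymp\frac{C'}{nm\,\lambda^{1/2h}},
\]
where the last step is the standard spectral entropy calculation for $\tau_k\asymp k^{-2h}$.

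The main obstacle is the $\vep$-term, since $\vep_{il}$ has only bounded pointwise variance and in general does not lie in $\mbK$.  My approach is to split, for each $i$,
\[
\frac{1}{m_i}\sum_j\vep_{il}(u_{ij})K_{u_{ij}}=T\vep_{il}+\Big(\frac{1}{m_i}\sum_j\vep_{il}(u_{ij})K_{u_{ij}}-T\vep_{il}\Big).
\]
The smoothed residual $T\vep_{il}$ always lies in $\mbK$, with $\E\|T\vep_{il}\|^2_{\mbK}=\E\langle\vep_{il},T\vep_{il}\rangle_{L^2}\leq\tau_1 M_\vep$ by the bounded pointwise variance alone.  Aggregating the first piece over $i$ yields $n^{-1}\sum_i T\vep_{il}\bX_i$; since $(\bT+\lambda\bI)^{-1}(\Sigma_X\otimes T)$ has $L^2$-operator norm at most one, this contributes $O_P(n^{-1})$ via a direct variance calculation using independence across $i$.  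The second piece is mean zero conditional on $\vep_{il}$, and by the same trace computation as in the $\delta$-case contributes $O_P(\lambda^{-1/2h}/(nm))$.  Summing over $l=1,\dots,L$ produces $R_n=O_P(\lambda+(nm)^{-1}\lambda^{-1/2h}+n^{-1})$, as claimed.  The key leverage of the hypothesis $nm\lambda^{\delta+1/2h}\to\infty$ with $\delta>1/2h$ is precisely that it makes the operator-concentration error in step one negligible compared to the three terms in the final bound.
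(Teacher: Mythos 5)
Your decomposition is reasonable and the bias, measurement-error, and smoothed-residual calculations are correct as far as they go, but the load-bearing step of your argument is asserted rather than proved, and it does not hold under the theorem's hypotheses. You claim that a Bernstein-type bound yields
\[
\bigl\| (\bT+\lambda\bI)^{-1/2}(\bT_{nm}-\bT)(\bT+\lambda\bI)^{-1/2} \bigr\| = o_P(1)
\]
whenever $nm\lambda^{\delta+1/2h}\to\infty$ for some $\delta>1/2h$, and that this is what lets you swap $(\bT_{nm}+\lambda\bI)^{-1}$ for $(\bT+\lambda\bI)^{-1}$. Two problems. First, in Hilbert--Schmidt norm the variance is not even finite under the paper's assumptions: the entries of the normalized difference have variance of order $(nm)^{-1}\min(\tau_k^{-1},\tau_\ell^{-1})/\bigl[(\tau_k+\lambda)(\tau_\ell+\lambda)\bigr]$ (the factor $\tau_k^{-1}$ is the best pointwise bound on $\sup_u v_k(u)^2$ that Lemma~\ref{l:erate} permits, since the paper never assumes the eigenfunctions are uniformly bounded), and summing this over $k,\ell$ diverges. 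Second, passing to the operator norm via an operator Bernstein inequality with the bounded-kernel estimate $\sup_u\|(\bT+\lambda\bI)^{-1/2}K_u\|_\mbK^2\le\kappa^2/\lambda$ gives a bound of order $\sqrt{1/(nm\lambda)}$ up to logarithms, which is $o(1)$ only when $nm\lambda\to\infty$. But the theorem's hypothesis allows $\delta+1/2h<1$ when $h>1$, in which case one can have $nm\lambda^{\delta+1/2h}\to\infty$ while $nm\lambda\to 0$ (e.g.\ $h=2$, $\lambda\asymp(nm)^{-3/2}$, $\delta=1/3$), so the concentration you invoke fails precisely in part of the regime the theorem is supposed to cover. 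This is not a cosmetic issue: the whole point of the weak hypothesis is to cover undersmoothed $\lambda$.

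The paper avoids this obstruction entirely by never attempting to show that $\bT_{nm}$ is uniformly close to $\bT$. Instead it introduces the intermediate quantity $\tilde\bbeta_\lambda = \bbeta_\lambda + (\bT+\lambda\bI)^{-1}(\bh_{nm}-\bT_{nm}\bbeta_\lambda-\lambda\bbeta_\lambda)$, which always premultiplies by the deterministic $(\bT+\lambda\bI)^{-1}$, and then controls $\hat\bbeta-\tilde\bbeta_\lambda$ by a self-bounding argument: a Cauchy--Schwarz in a fractional norm $\|\cdot\|_\delta$ (with $1/2h<\delta<1-1/2h$) gives $\|\hat\bbeta-\tilde\bbeta_\lambda\|_\alpha^2\le O_P(1)\,\|\hat\bbeta-\bbeta_\lambda\|_\delta^2\,(nm)^{-1}\lambda^{-\alpha-1/2h}$, and the hypothesis $(nm)^{-1}\lambda^{-\delta-1/2h}\to0$ is exactly what closes the triangle-inequality bootstrap and shows this remainder is negligible. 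That fractional-norm argument is doing genuinely more than operator concentration can; if you want your route to go through you would need to either strengthen the hypothesis to $nm\lambda\to\infty$ (which changes the theorem) or replicate some version of the paper's $\alpha$-norm bootstrap inside your scheme.
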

Here we see three core components driving the statistical properies of $\hat \bbeta$.  As is common in nonparametric smoothing, the bias is given by $\lambda$.  The stochastic error is driven by two components.  The first is driven by the total number of observed values and takes a familiar ``nonparametric rate." The last component is a parametric rate, but only decreases with $n$, reflecting that there is a bounded amount of information that can be extracted from a single function/unit.  Balancing the bias and stochastic error, we arrive at the optimal rate of convergence.  

\begin{thm}\label{t:upper2}
Assume that \ref{a:main} holds and that $\hat \bbeta$ is as given in \eqref{e:estimate}.  If $\lambda \asymp (nm)^{-2h/(1+2h)} $ then the excess risk satisfies
\[
\limsup_{n \to \infty}  \sup_{\beta_{\ell k \in B_{\mbK}}} P( R_n \geq \epsilon^{-1} ((nm)^{-2h/(2h+1)} + n^{-1} )) \to 0
\qquad \text{as } \epsilon \to 0.
\]
\end{thm}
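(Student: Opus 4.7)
The plan is to derive Theorem \ref{t:upper2} as an essentially immediate corollary of Theorem \ref{t:upper}, by substituting the rate-optimal choice of $\lambda$ into the three-term upper bound and then upgrading the $O_P$ statement to the required uniform $\limsup$ formulation.

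First I would verify that the prescribed choice $\lambda \asymp (nm)^{-2h/(2h+1)}$ is admissible in the sense required by Theorem \ref{t:upper}. Substituting gives $nm\,\lambda^{\delta + 1/2h} \asymp (nm)^{1 - 2h(\delta + 1/(2h))/(2h+1)}$, which tends to $\infty$ precisely when $\delta < 1$. Since Assumption \ref{a:main} imposes $h \geq 1$, the interval $(1/(2h),\,1)$ is non-empty, so a valid $\delta$ can be chosen.

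Next I would plug $\lambda \asymp (nm)^{-2h/(2h+1)}$ into the bound $R_n = O_P(1)\bigl[\lambda + \lambda^{-1/(2h)}(nm)^{-1} + n^{-1}\bigr]$ supplied by Theorem \ref{t:upper}. The bias term is $\lambda \asymp (nm)^{-2h/(2h+1)}$; the middle stochastic term simplifies to $\lambda^{-1/(2h)}(nm)^{-1} \asymp (nm)^{1/(2h+1)-1} = (nm)^{-2h/(2h+1)}$, which matches the bias exactly (confirming the choice of $\lambda$ as the optimal bias--variance balance); the third term $n^{-1}$ is unchanged. Summing yields $R_n = O_P\bigl((nm)^{-2h/(2h+1)} + n^{-1}\bigr)$.

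The only genuine step left, and the place I expect to spend the most care, is converting pointwise $O_P$ into the uniform $\limsup$ over $\{\beta_{\ell p} \in B_{\mbK}\}$ stated in the theorem. This requires inspecting the proof of Theorem \ref{t:upper} and confirming that every constant absorbed into the $O_P(1)$ factor depends only on the quantities uniformly controlled by Assumption \ref{a:main}, namely $M_0$ (the radius of $B_{\mbK}$), $M_\delta$, $M_\vep$, $\zeta$, $\nu$, and the eigenvalue constants in $\tau_k \asymp k^{-2h}$, and not on the particular $\beta_{\ell p}$ being estimated. Because the bias analysis interacts with $\bbeta$ only through $\|\bbeta\|_{\mbK}$ (bounded by $M_0$) and the stochastic analysis treats $\bh_{nml} - \bT_{nm}\bbeta$ via moments that depend only on $M_\vep$, $M_\delta$, and $\zeta$, this uniformity is genuine rather than a matter of redoing the argument. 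Once that is verified, the tail statement $\sup_{\beta \in B_{\mbK}} P(R_n \geq \epsilon^{-1}((nm)^{-2h/(2h+1)} + n^{-1})) \to 0$ as $\epsilon \to 0$ follows from the standard equivalence between a uniform-in-parameter $O_P$ bound and the $\limsup$ form, completing the proof.
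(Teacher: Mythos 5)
Your proposal is correct and follows essentially the same route as the paper: the appendix establishes the general three-term bound of Theorem~\ref{t:upper}, and its closing paragraph verifies that the choice $\lambda \asymp (nm)^{-2h/(1+2h)}$ satisfies the admissibility condition ($nm\,\lambda^{\delta+1/2h}\to\infty$ for some $\delta\in(1/2h,1)$, non-empty since $h\geq 1$) and balances the bias against the $(nm)^{-1}\lambda^{-1/2h}$ variance term to give the stated rate. Your care about uniformity over $B_{\mbK}$ is appropriate and correct --- the paper's bias bound enters only through $\|\bbeta_0\|_{\mbK}\leq M_0$ and the stochastic bounds only through $M_\vep$, $M_\delta$, $\zeta$, $\nu$, and the $\tau_k$-asymptotics, so the $O_P(1)$ factor is indeed uniform over the ball.
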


Combining Theorems \ref{t:lower} and \ref{t:upper2} we get that the minimax rate of converges is $(nm)^{-2h/(2h+1)} + n^{-1}$.  Furthermore, this rate holds quite broadly across different $\mbK$.  
The \textit{phase-transition} occurs when the rate becomes parametric, i.e., $n^{-1}$.  Clearly this occurs if
\[
(nm)^{-2h/(2h+1)} \ll n^{-1}
\Longleftrightarrow m \gg n^{1/2h}.
\]
In other words, the rate becomes parametric if the (harmonic) average number of points per curve is more than $n^{1/2h}$.  If $m$ is less, then the rate is slower than parametric.  In the worst case, when $m$ is bounded, the rate becomes the classic nonparametric rate $n^{-2h/(2h+1)}$.

\subsection{Interpreting the rate}\label{s:hrate}
In our theory, $h$ is only tied to the rate of decay of the eigenvalues of the RKHS kernel.  However, there are settings where this rate can be made more interpretable.  In the remainder of this section, we state the following theorem for Riemannian manifolds, which ties together several classic results from nonlinear analysis, and extends well-known connections between RKHS and Sobolev spaces for Euclidean spaces.  As the proof uses a number of results that might be of interest to readers, we state it here instead of in the appendix.  

\begin{thm}\label{t:hrate}
Let $\mcU$ be a compact $d$-dimensional Riemannian manifold.  Let $H^r(\mcU)$ denote the Sobolev space of real valued functions whose first $r$ weak derivatives are in $L^2(\mcU)$ and assume $2r > d$. Then $H^r(\mcU)$ is a reproducing kernel Hilbert space and the eigenvalues of the reproducing kernel decay like $\tau_k \asymp k^{-2 r/d}$.
\end{thm}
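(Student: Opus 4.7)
The plan is to prove this in three steps: (i) establish the RKHS property via Sobolev embedding, (ii) characterize $H^r(\mcU)$ spectrally using the Laplace--Beltrami operator on $\mcU$, and (iii) apply Weyl's law to obtain the stated eigenvalue decay.

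First, for the RKHS property, I would invoke the Sobolev embedding theorem on a compact Riemannian manifold: if $2r > d$, then $H^r(\mcU) \hookrightarrow C(\mcU)$ continuously. This is the standard result, proved by choosing a finite atlas with subordinate partition of unity and reducing to the Euclidean Sobolev embedding $H^r(\mbR^d) \hookrightarrow C(\mbR^d)$ under the same dimensional restriction. Continuity of the embedding yields a constant $C$ with $|f(u)| \leq C \|f\|_{H^r}$ for every $u \in \mcU$, so point evaluation is a bounded linear functional on $H^r(\mcU)$. By the Riesz representation theorem, $H^r(\mcU)$ is therefore an RKHS with some continuous reproducing kernel $K$.

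Second, to identify the spectral shape of this kernel, let $\Delta$ denote the (nonnegative) Laplace--Beltrami operator on $\mcU$ with respect to the Riemannian volume measure, which (up to a constant) we may take to be $\mu$. As a self-adjoint positive operator on $L^2(\mcU)$ with compact resolvent, $\Delta$ has a discrete spectrum $0 = \lambda_0 \leq \lambda_1 \leq \lambda_2 \leq \cdots$ with a corresponding orthonormal basis $\{\phi_k\} \subset C^\infty(\mcU)$ of eigenfunctions. For integer $r$, elliptic regularity and iterated integration by parts give the norm equivalence
\[
\|f\|_{H^r}^2 \;\asymp\; \sum_{k=0}^\infty (1+\lambda_k)^r |\langle f, \phi_k \rangle|^2,
\]
so $H^r(\mcU)$ is isometrically (up to equivalence) the Hilbert space of $L^2$-functions with $\sum_k (1+\lambda_k)^r |\langle f, \phi_k\rangle|^2 < \infty$. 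Comparing with the Mercer-type representation of $\mbK$ in Section~2.1, where $\mbK = \{ f : \sum_k \langle f, v_k\rangle^2/\tau_k < \infty\}$, we identify $v_k = \phi_k$ and $\tau_k \asymp (1+\lambda_k)^{-r}$, which are precisely the eigenvalues of the reproducing kernel acting as an integral operator on $L^2(\mcU)$.

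Finally, Weyl's law for the Laplace--Beltrami operator on a compact $d$-dimensional Riemannian manifold gives $\lambda_k \asymp k^{2/d}$ (the leading-order asymptotics of eigenvalue counts involve the volume of $\mcU$, which is an irrelevant constant in our $\asymp$ statement). Substituting yields $\tau_k \asymp (1+\lambda_k)^{-r} \asymp k^{-2r/d}$, establishing the theorem.

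The main obstacle is Step 2: the equivalence between the coordinate-defined Sobolev norm (in terms of weak derivatives along charts) and the intrinsic spectral norm built from powers of $\Delta$. This is classical in geometric analysis but nontrivial, requiring elliptic regularity to bootstrap control of mixed covariant derivatives from $\|(I+\Delta)^{r/2} f\|_{L^2}$, together with the coordinate-invariance of the $H^r$ norm. Steps 1 and 3 are essentially direct invocations of the Sobolev embedding theorem and Weyl's law, respectively, but Step 2 is where the real work lies and is the reason this theorem is more than a bookkeeping exercise.
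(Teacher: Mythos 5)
Your proposal is correct and follows essentially the same route as the paper's proof: Sobolev embedding gives continuity of point evaluation (hence the RKHS property via Riesz), the spectral characterization of $H^r(\mcU)$ via the Laplace--Beltrami operator identifies the kernel eigenvalues as $\tau_k \asymp (1+\lambda_k)^{-r}$, and Weyl's law gives $\lambda_k \asymp k^{2/d}$. The only cosmetic difference is that you sidestep the zero eigenvalue by using $(1+\lambda_k)^r$ throughout, whereas the paper splits the sum at a finite index $k_0$; both are standard ways of defining an equivalent norm, and the paper relegates your ``Step 2'' norm equivalence to a citation rather than sketching elliptic regularity as you do.
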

\begin{proof}
The Sobolev space, $\mcH^r:=\mcH^r(\mcU)$, of real functions over $\mcU$ with $r$ weak derivatives in $L^2(\mcU)$ can be continuously embedded into the space of continuous functions, $C(\mcU)$, if $2r > d$ \citep[][e.g. Section 2.3]{hebey2000nonlinear}.  This means that we can identify each $f \in H^r$ as the unique continuous representative of its corresponding equivalence class.  Then $\|f\|_{C(\mcU)} \leq M \|f\|_{\mcH^r} $, for some $M>0$ (across all $f$).  
Since $f(x) \leq \|f\|_{C(\mcU)}$ this means point-wise evaluation would be a continuous linear functional on $\mcH^r$  
and by the Riesz representation theorem, the space must also be an RKHS (recall a Hilbert space where point-wise evaluations are continuous is necessarily an RKHS).  

One can construct a kernel function that gives rise to $\mcH^r$ using the Laplace-Beltrami operator (i.e. the Laplacian for manifolds) acting over the space of infinitely differentiable functions, $\Delta: C^\infty(\mcU) \to C^\infty(\mcU)$.  This operator has eigenvalues tending to infinity, which we will label $0 \leq \xi_1 \leq \xi_2 \leq \dots $ (and can be zero), and corresponding eigenfunctions $v_1,v_2,\dots$, which, while infinitely differentiable, can be taken to be an orthonormal basis of $L^2(\mcU)$ \citep[Section 7.1]{canzani2013analysis}.  The Sobolev space $\mcH^r$ can be identified as
\[
\mcH^r = \left\{f \in L^2(\mcU) : \sum_{k=1}^\infty \xi_k^{r} \langle v_k, f \rangle^2 < \infty \right\},
\]
see, e.g., Chapter 3 of \cite{craioveanu2013old}.
Note that the first eigenvalue, $\xi_k^r$ is usually zero, meaning we do not restrict a function $f$ in that direction.  
We can equip $\mcH^r$ with a norm equivalent to the Sobolev norm as
\[
\|f\|^2_{\mcH^r}:= \sum_{k \leq k_0} \langle f, v_k\rangle^2 + \sum_{k>k_0} \xi_k^{r} \langle f , v_k \rangle^2,
\]
where $k_0$ is any integer satisfying $\xi_k > 0$ for $k>k_0$, thus avoiding the zero eigenvalue (taking $k_0 = 0$ would only result in a semi-norm, not a norm). 

Weyl's law for compact Riemannian manifolds \citep[Section 7.8]{canzani2013analysis} implies that $\xi_k \asymp k^{2/d}$ resuling in $\tau_k \asymp k^{-2 r/d}$.
\textcolor{black}{
Now define the linear operator
\[
K:= \sum_{k \leq k_0} v_k \otimes v_k + \sum_{k > k_0} \xi_k^{-r} v_k \otimes v_k,
\]
where the role of $\tau_k$ is now taken by either $1$ or $\xi_k^{-r}$.  Since we assume that $2h/d > 1$, this implies that $K$ is actually a Hilbert-Schmidt operator acting on $L^2(\mcU)$ (in fact it is trace class) and thus it must also be an integral operator and we can use its kernel as the reproducing kernel of the space.} 

%
%
\end{proof}

According to Theorem \ref{t:hrate}, we have that $h = r/d$ for Sobolev spaces over domains represented as compact Riemannian manifolds (this connection was already known for Euclidean spaces).  The minimax rate and phase transition become
\[
(nm)^{\frac{-2r}{2r+d}} + n^{-1} \qquad \text{and} \qquad
m \asymp n ^{d/2r}.
\]
We can see the effect of the dimension of the domain on the rates.  As we move to higher dimensions the rates get worse, while they improve if the parameters have more derivatives.  However, the key point to note is that the rate depends only on the intrinsic dimension of the manifold and not on the dimension of any ambient space. The point where one hits a parametric rate, which is commonly used to distinguish between dense and sparse functional data \citep{zhang2016sparse}, is much higher for more complex domains.  For example, it is common to assume $r=2$ derivatives in practice.  For one dimensional domains, the phase transition would then occur at $m \asymp n^{1/4}$, which is a relatively easy threshold to meet, while for two dimensions it becomes $n^{1/2}$ and over three it becomes $n^{3/4}$, meaning that one needs nearly as many points per curve as one has subjects, which is a much more stringent threshold.


\section{Numerical Illustrations} \label{s:num}
In this section we provide a simulation study to numerically explore the estimation error and also provide an application to 3D facial imaging data. Before providing the simulation results and data application, we briefly describe how our estimators are computed.

\subsection{Computation}\label{s:comp}
Using the representer theorem one can obtain an exact expression for the estimator.  However, this turns out to be very inefficient computationally as it involves solving for $\sum_i m_i$ parameters.  Instead, we will approximate the estimator for $\beta_p$ using the first $k_0$ eigenfunctions of $K(u, u')$:
\[
\beta_{p k_0}(u) = \sum_{k=1}^{k_0} b_{pk} v_k(u).
\]
We provide an exact form for the the coefficients $\{b_{pk} \}$.  As long as $k_0$ is chosen large enough, then the truncation error will be of a lower order than the convergence rate.  If $\beta$ all lie in a $\mbK$ ball then the truncation error is of the order
\[
\| \beta_p - \beta_{p k_0}\|^2
= \sum_{k=k_0+1}^\infty b_{pk}^2 = \sum_{k=k_0+1}^\infty \tau_k \frac{b_{pk}^2}{\tau_k}\leq \tau_{k_0} \|\beta_p\|_{\mbK}^2 \asymp k_0^{-2h}. 
\]
We see that as long as $k_0 \gg n^{1/2h}$ and $k_0 \gg (nm)^{1/(2h+1)}$ then the truncation error will be asymptotically negligible.  Of course, in practice, one can take $k_0$ much larger as long as the computational resources allow.

For simplicity, we assume that $m_i \equiv m$, but the general case can be handled by reweighting the $X_{ip}$ and $Y_{ij}$ and using $\bar{m} = \frac{1}{n} \sum_{i=1}^n m_i$ in place of $m$. Let $\bb = \{ b_{pk} \} \in \mathbb{R}^{P \times k_0}$. The target function is now given by
\begin{equation} \label{e:ls_obj}
\ell_{nm,\lambda}(\bb) =
\frac{1}{nm} \sum_{i=1}^n \sum_{j=1}^{m} \left(Y_{ij} - 
\sum_{p=1}^P \sum_{k=1}^{k_0} X_{ip} b_{pk} v_k(u_{ij})\right)^2 +  \lambda \sum_{p=1}^P \sum_{k=1}^{k_0} \frac{b_{pk}^2}{\tau_k}.
\end{equation}

We will rewrite this expression using vector/matrix notation.  First, let $b_v = vec(\bb)$, where $vec$ denote stacking the columns into a single vector.  Properties of the vec operation imply that
\[
\sum_{k=1}^{k_0} X_{ip} b_{pk} v_k(u_{ij}) 
= \bX_i^\top \bb V_{ij}
= ( V_{ij}^\top \otimes \bX_i^\top) b_v
\]
where $V_{ij} = \left(v_1(u_{ij}), \dots, v_{k_0} (u_{ij}) \right)^\top$.
Define 
\[
Y_v = \vc(\bY),
\qquad
\bA = \!\begin{aligned}[t]
      \Biggl\{
            &(V_{11}^\top \otimes \bX_1^\top),
            (V_{21}^\top \otimes \bX_2^\top),
            \cdots, 
            (V_{n1}^\top \otimes \bX_n^\top),\\
            &(V_{12}^\top \otimes \bX_1^\top),
            (V_{22}^\top \otimes \bX_2^\top),
            \cdots,
            (V_{n2}^\top \otimes \bX_n^\top),\\
            &\cdots,\\
            &(V_{1m}^\top \otimes \bX_1^\top),
            (V_{2m}^\top \otimes \bX_2^\top),
            \cdots,
            (V_{nm}^\top \otimes \bX_n^\top)
        \Biggr\}^\top
    \end{aligned}
\]
and let $T$ be a diagonal matrix with its diagonals corresponding to $\{\tau_k \}$, $k = 1, \dots, k_0$. 
Then the target function becomes
\[
\frac{1}{nm} (Y_v - \bA b_v)^\top (Y_v - \bA b_v) + b_v^\top (\Tau^{-1} \otimes \lambda I_P) b_v.
\]
The solution can then be expressed as
\[
\hat b_v = \left( (nm)^{-1} \bA^\top \bA + (\Tau^{-1} \otimes \lambda I_P)\right)^{-1} (nm)^{-1} \bA^\top Y_v.
\]
We choose the tuning parameter, $ \lambda $, using generalized cross-validation.  In the application we allow each $\beta_k$ to have a separate tuning parameter. If $\lambda_k$ is the tuning parameter for $\beta_k$, we can put $\Lambda$ instead of $\lambda I_P$ above where $\Lambda$ is a diagonal matrix with its diagonals corresponding to $\{ \lambda_p \}$, $p=1, \dots,P$. We cycle several times through each predictor selecting the best value. 

\subsection{Simulation}
\label{s:sim}
In this section, we evaluate the numerical performance of our estimator. We use a simplified setting with a one-dimensional outcome and one predictor to illustrate the effects of the domain $\mcU$, the sample size $n$, the number of observations per sample $m$, and different levels of smoothness of the underlying parameters.  The underlying model becomes
\[
Y_i(u) = X_i\beta(u) + \vep_i(u).
\]
For our simulation, we will use the Mat{\'e}rn kernel as it has parameters that directly controls the smoothness and its resulting RKHS can be tied to a particular Sobelev space \citep{aronszajn1961theory,cho2017compactly}. The Mat{\'e}rn kernel has the form 
\[
K(u,w) = \frac{2^{1-\nu}}{\Gamma(\nu)} \frac{\sqrt{2 \nu} \|u - w \|^2 }{\rho} K_\nu \left( \frac{\sqrt{2 \nu} \|u - w \|^2 }{\rho} \right)
\] 
where $K_v$ signifies the modified Bessel function of the second kind of order $\nu$. The smoothing parameter $\nu$ controls the smoothness of the resulting RKHS, and the range parameter $\rho$ scales the distance between $u$ and $s$. Larger $\nu$ would mean that the resulting RKHS will be smoother, and its eigenvalues will decay faster.

We generate the beta function for the $s$th simulation setting as
$$\beta^s (u) = \sum_{k=1}^{k_s} v^s_k(u) + \sum_{k > k_s} \tau^s_k  v^s_k(u)
$$
where $\{v^s_k\}$ are the eigenfunctions of Mat{\'e}rn kernel $K$ with smoothness parameter $\nu_s$ and range parameter $\rho = 1$. The number of leading eigenfunctions for the beta is $J_s$ while the eigenvalues and eigenfunctions of the RKHS are estimated using the algorithm of \cite{pazouki2011kernelbase}. We have created three settings for the dimension of $\mcU$ being one ($d=1$) and three settings for the dimension of $\mcU$ being two ($d=2$), and they are different in terms of the smoothness of RKHS where the beta lies ($\nu_s$) and the number of leading eigenfunctions ($k_s$). The resulting $\beta^s$ functions are shown in Figure \ref{fig:sim_beta}. The setting 1 beta function is the roughest and the setting 3 beta function is the smoothest for $d=1$ cases, and the setting 4 beta function is the roughest and the setting 6 beta function is the smoothest for $d=2$ cases.

\begin{multicols}{2}
{\small
\begin{itemize}
\item[] $d=1$
\begin{itemize}
\item Setting 1: $\nu_1 = 3/2$, $k_1 = 7$.
\item Setting 2: $\nu_2 = 7/2$, $k_2 = 5$.
\item Setting 3: $\nu_3 = 11/2$, $k_3 = 3$.
\end{itemize}
\item[] $d=2$
\begin{itemize}
\item Setting 4: $\nu_4 = 11/2$, $k_4 = 7$.
\item Setting 5: $\nu_5 = 15/2$, $k_5 = 5$.
\item Setting 6: $\nu_6 = 19/2$, $k_6 = 3$.
\end{itemize}
\end{itemize}
}
\end{multicols}

\begin{figure}[ht]
    \centering
    \includegraphics[width=.34\linewidth]{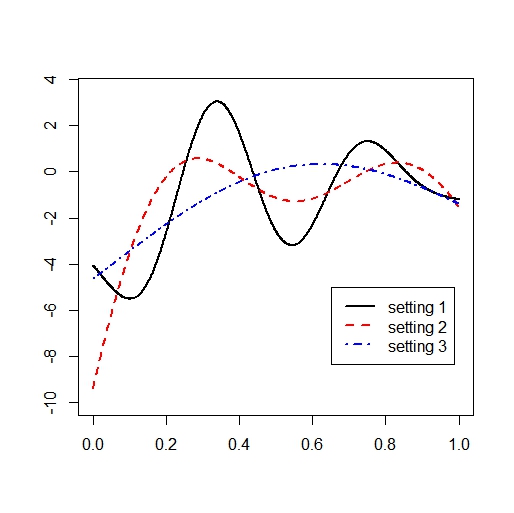}
    \includegraphics[width=.64\linewidth]{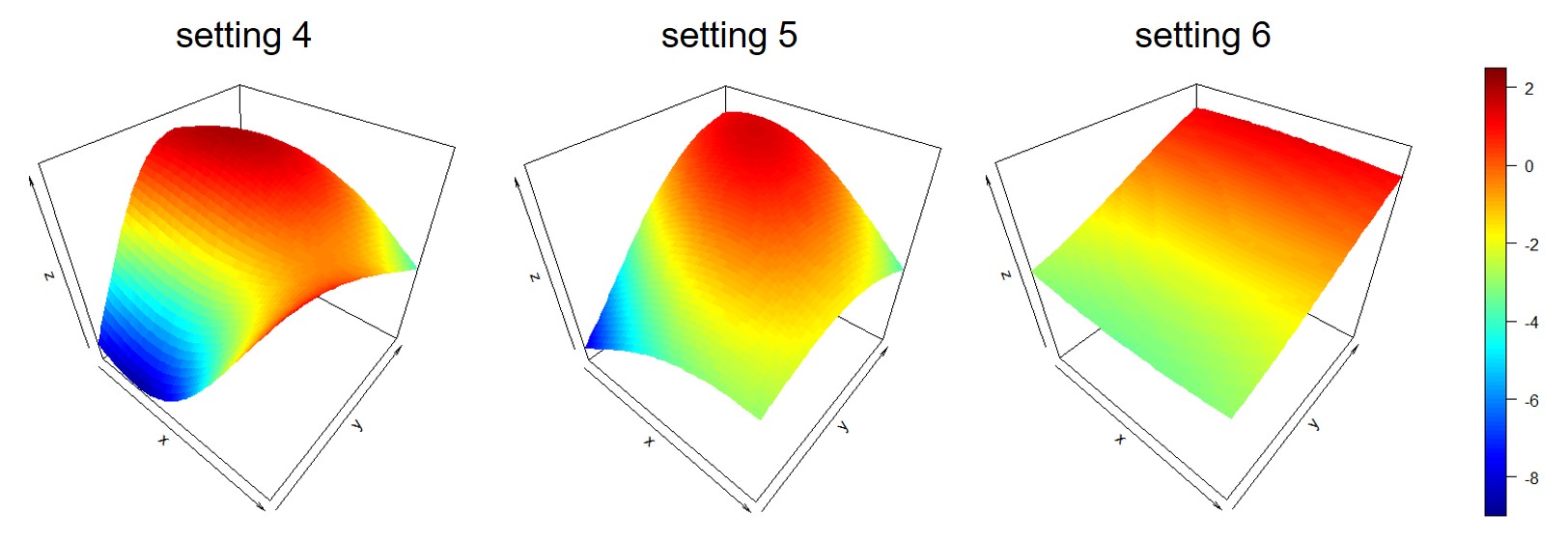}
    \caption{The beta functions generated for each simulation setting. The leftmost plot shows $d=1$ cases and the right three plots show $d=2$ cases.}
    \label{fig:sim_beta}
\end{figure}

We generate the observed data as
$$Y_{ij} = X_i \beta(u_j) + \epsilon_i (u_j) + \delta_{ij}$$
where $i = 1, \cdots, n$, $j = 1, \cdots, m$, $X_i \sim N(1,1)$, $\epsilon_i (u_j) = \sum_{} \epsilon_{ik} v_k(u_j)$ with $\epsilon_{ik} \sim N(0, \tau_k^2)$, and $\delta_{ij} \sim N(0, 0.1)$. For each simulation setting, we tried $n = 10, 25, 50, 75, 100, 125, 150$ and $m = 5, 10, 25, 50, 75, 100$ and ran 1000 repetitions of each scenario. We assume that we do not know the true kernel, but we will use Mat{\'e}rn kernel to estimate the beta. We choose the smoothness parameter $\nu$ and the length parameter $\rho$ for the RKHS where the estimated beta lies through the generalized cross validation (GCV). 
The choice of $\lambda$ in \eqref{e:ls_obj} is also done through GCV. 
Then we find the squared estimation error of $\| \hat{\beta}^s - \beta^s \|^2$ for each run and the mean squared estimation error of 1000 simulation runs for each $n$ and $m$ is shown in Figure \ref{fig:beta_err_1d} and Figure \ref{fig:beta_err_2d}.

\begin{figure}[ht]
    \centering
    \includegraphics[width=.99\linewidth]{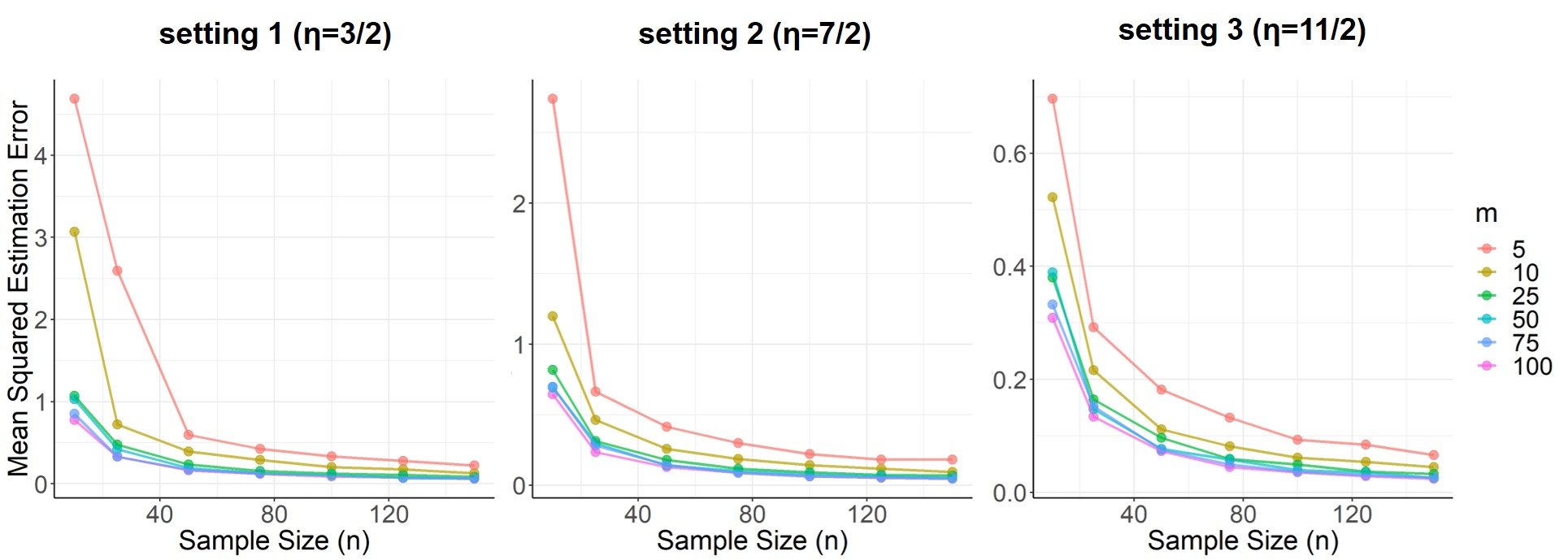}
    \caption{The effects of $n$ on the mean squeared estimation errors for $d=1$.}
    \label{fig:beta_err_1d}
\end{figure}

\begin{figure}[ht]
    \centering
    \includegraphics[width=.99\linewidth]{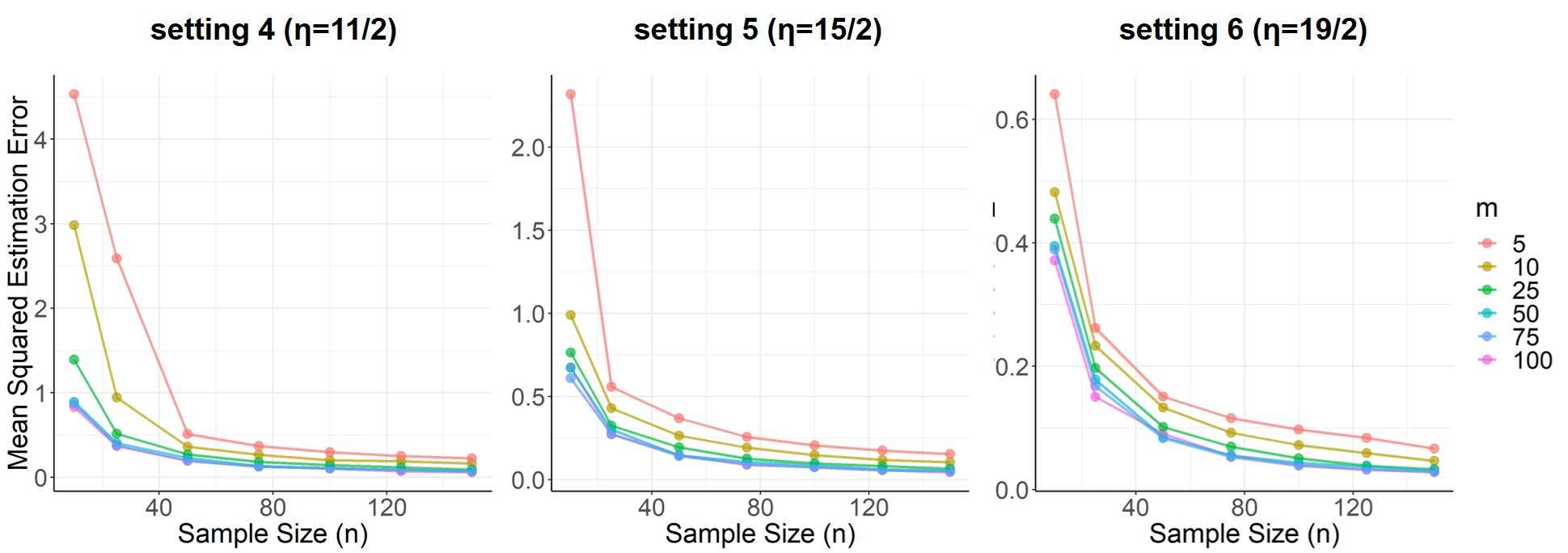}
    \caption{The effects of $n$ on the mean squeared estimation errors for $d=2$.}
    \label{fig:beta_err_2d}
\end{figure}

\noindent {\bf Discussion}: For both $d=1$ cases and $d=2$ cases, the mean squared estimation error decreases significantly as $n$ increases. We can also find the {\it phase-transition} that $n^{-1}$ becomes the dominating component of the convergence rate as $m$ is larger than $n^{\frac{1}{2h}}$, or $n^{\frac{d}{2r}}$, as the mean squared estimation error lines overlap with each other when $m$ is sufficiently large. For the same $n$ and $m$, the mean squared estimation error decreases as $\nu$ increases, or the true beta lies in the smoother RKHS. From setting 1 to setting 3, the mean squared estimation error decreases for the same $n$ and $m$, and the same happens from setting 4 to setting 6. When we compare Figure \ref{fig:beta_err_1d} and Figure \ref{fig:beta_err_2d}, one sees a fairly similar estimation error for both $d=2$ and $d=1$, however, the smoothness required to accomplish this is much higher in the two dimensional setting, which highlights the increased difficulty of the problem.  

\subsection{3D Facial Data} 
\label{s:adapt}
We apply our optimal estimator to the facial data collected through the Penn State ADAPT study \citep{claes:hill:shriver:2014,claes:etal:2014}. Following the framework of \cite{kang2017manifold} (who based their models on felsplines and FPCA), we fit a manifold-on-scalar regression model with the dependent/outcome variable being a 3D human facial face parametrized by a two-dimensional manifold $\mcU$ representing a common template face (we use the average face), and the independent/explanatory variables as sex, age, height, weight, and genetic ancestry. Genetic ancestry is measured as the proportions from particular ethnic backgrounds: Northern Europe, Southern Europe, East Asia, South Asia, Native America, and West Africa. We also include interactions between sex and age, age and weight, and height and weight.

The faces are densely measured with 7150 points in x, y, and z coordinates, so $Y_{ijl}$ in \eqref{e:model} will be the measurement of the $j$-th point of the $i$-th person's face in the $l$-th coordinate.  The sample size is $n = 3287$, with $m = 7150$ and $L = 3$. Since the template face, $\mcU$, is two-dimensional manifold, $d=2$. There are in total $P=13$ predictors including the intercept term. Prior to model fitting all faces are scaled and aligned using generalized procrustes analysis.  The computation follows section \ref{s:comp}, and the choices of $\lambda$, $\nu$, and $\rho$ are done through GCV. 

Four of the resulting $\hat{\beta}_{k}$'s, which are 3-dimensional functional objects, are shown in Figure \ref{fig:ADAPT_bhat}. The middle plot is the predicted face of a Northern European male, aged 30, with height of 170cm and weight of 70kg.  In each corner we repeat the prediction, but with one covariate value changed. On the top left is the predicted face of a Northern European female with the same age, height, and weight. The red and blue plot in between is the visualized estimated beta for sex. Red means there is a shift of the face outward, and blue means inward. From male to female, the red on the cheek and the blue on the chin show that the face becomes a bit rounder, and the red on the eyelids and the blue on the eyebrow give less prominent eyebrows and rounder eyes. Also, the slight hint of red around the nostrils show that female would also have a bit rounder nose. 

\begin{figure}[ht]
    \centering
    \includegraphics[width=.95\linewidth]{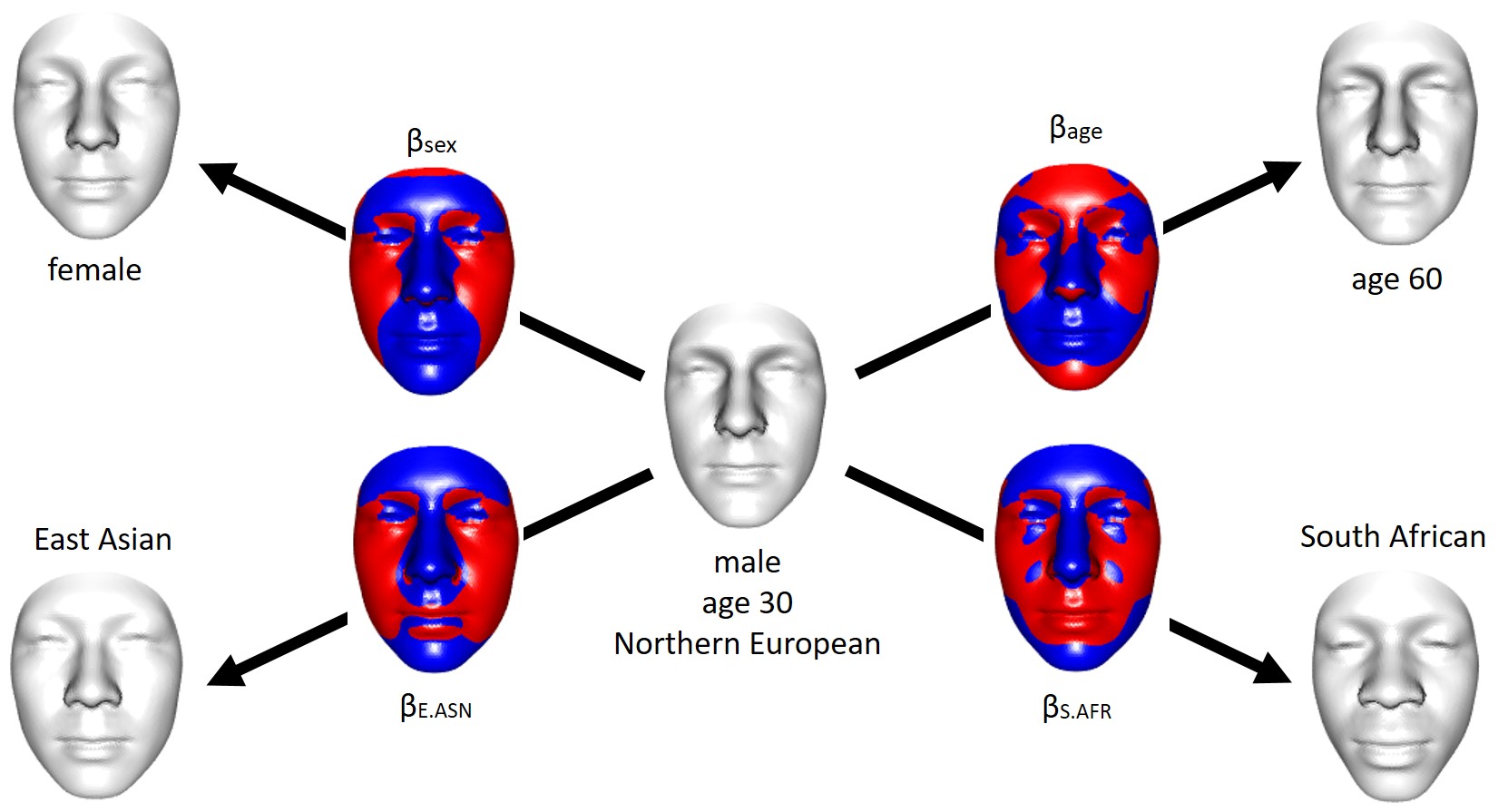}
    \caption{The middle grey plot is the predicted face of a Northern European male with age of 30, height of 170cm, and weight of 70kg. The four grey plots on the far sides are the predicted faces with one predictor change from the middle plot. The red and blue plots are the visualization of the effects of the corresponding estimated betas where the red means outward effect and the blue means inward effect. }
    \label{fig:ADAPT_bhat}
\end{figure}

On the top right is the predicted face when changing the age from 30 to 60 years old. The red and blue plot in between is again the visualized estimated beta for age. The red in the cheeks and jawline and the blue in between them show that the skin hangs more loosely on the cheek and jawline area, which (unfortunately) is a common aging effect. 
Another noticeable effect is on the eyes; the loose skin on eyelids and the bags under eyes are also well-known aging effects, and this is captured in the beta plot with the red on the eyelids and under the eye, and with the blue in the middle and on the sides of the eyes.

On the bottom left is the corresponding predicted face for East Asian ancestry, and the corresponding colored plot shows how the predicted faces differ between Northern Europeans and East Asians.
The red on the cheek and the blue on the chin shows that the predicted East Asian has a rounder face, and the blue on the nose with a little red on the sides mean that the predicted East Asian has a less prominent and slightly rounder nose than Northern European. Also, the predicted East Asian has less prominent eyebrows and forehead as the blue on that areas shows, and he has rounder eyes.

On the bottom right is the predicted face for South African ancestry. The plots indicate that the nose of the predicted South African is flatter and wider with the blue in the middle of nose and the red on the sides of nose. There seems to be minor tear-through nasojugal grooves under the eyes and the nasolabial folds below the nose in the predicted face of a South African, and these lines are captured with the blue dots on the cheeks. 

\section{Conclusions}

In this work we have presented new results concerning minimax rates for function-on-scalar regression when the domain of the functions is more complex than just an interval.  Assuming the parameters reside in an RKHS results in the rates being closely tied to the decay of the eigenvalues.  However, the rates in such cases, and thus the difficulty of the problem, can be somewhat hidden behind the eigenvalues.  To add clarity to our results, we extend well known connections between RKHS and Sobolev spaces to the case where the domain of the functions are compact Riemannian manifolds.  

A great deal of biomedical imaging data is being collected and analyzed in scientific studies.  As our technologies progress, such statistical methods will become increasingly important.  This is especially critical if statistical tools are to keep pace with more ``black box" machine learning methods.  Indeed, though the data is complicated, a major selling point of our methodology (and most statistical methods) is the ability to provide clear interpretations for the effects in our model, which scientists and practitioners will find useful.  

We provided a practical strategy for implementing our methods via a basis representation based on the RKHS kernel being used, which avoids some of the large matrix inversion problems inherent in using the representer theorem.  This approach scales nicely and provides a flexible tool that can be applied in a variety of settings so long as the RKHS kernel can be defined and computed.  However, we don't view this estimator as definitive and would be excited to see what insights other researchers have when choosing kernels and modelling strategies for different applications. 

\bibliographystyle{abbrvnat}
\bibliography{OptFSReg}

\begin{thebibliography}{34}
\providecommand{\natexlab}[1]{#1}
\providecommand{\url}[1]{\texttt{#1}}
\expandafter\ifx\csname urlstyle\endcsname\relax
  \providecommand{\doi}[1]{doi: #1}\else
  \providecommand{\doi}{doi: \begingroup \urlstyle{rm}\Url}\fi

\bibitem[Aronszajn and Smith(1961)]{aronszajn1961theory}
N.~Aronszajn and K.~T. Smith.
\newblock Theory of bessel potentials. i.
\newblock In \emph{Annales de l'institut Fourier}, volume~11, pages 385--475,
  1961.

\bibitem[Barber et~al.(2017)Barber, Reimherr, Schill,
  et~al.]{barber2017function}
R.~F. Barber, M.~Reimherr, T.~Schill, et~al.
\newblock The function-on-scalar lasso with applications to longitudinal gwas.
\newblock \emph{Electronic Journal of Statistics}, 11\penalty0 (1):\penalty0
  1351--1389, 2017.

\bibitem[Berlinet and Thomas-Agnan(2011)]{berlinet2011reproducing}
A.~Berlinet and C.~Thomas-Agnan.
\newblock \emph{Reproducing kernel Hilbert spaces in probability and
  statistics}.
\newblock Springer Science \& Business Media, 2011.

\bibitem[Cai and Yuan(2011)]{cai2011optimal}
T.~T. Cai and M.~Yuan.
\newblock Optimal estimation of the mean function based on discretely sampled
  functional data: Phase transition.
\newblock \emph{The annals of statistics}, 39\penalty0 (5):\penalty0
  2330--2355, 2011.

\bibitem[Cai and Yuan(2012)]{cai2012minimax}
T.~T. Cai and M.~Yuan.
\newblock Minimax and adaptive prediction for functional linear regression.
\newblock \emph{Journal of the American Statistical Association}, 107\penalty0
  (499):\penalty0 1201--1216, 2012.

\bibitem[Canzani(2013)]{canzani2013analysis}
Y.~Canzani.
\newblock Analysis on manifolds via the laplacian.
\newblock \emph{Lecture Notes available at: http://www. math. harvard.
  edu/canzani/docs/Laplacian. pdf. Google Scholar}, 2013.

\bibitem[Cho(2017)]{cho2017compactly}
Y.-K. Cho.
\newblock Compactly supported reproducing kernels for $l^{2}$-based sobolev
  spaces and hankel-schoenberg transforms.
\newblock \emph{arXiv preprint arXiv:1702.05896}, 2017.

\bibitem[Choe et~al.(2017)Choe, Nebel, Barber, Cohen, Xu, Pekar, Caffo, and
  Lindquist]{choe2017comparing}
A.~S. Choe, M.~B. Nebel, A.~D. Barber, J.~R. Cohen, Y.~Xu, J.~J. Pekar,
  B.~Caffo, and M.~A. Lindquist.
\newblock Comparing test-retest reliability of dynamic functional connectivity
  methods.
\newblock \emph{Neuroimage}, 158:\penalty0 155--175, 2017.

\bibitem[Claes et~al.(2014a)Claes, Hill, and Shriver]{claes:hill:shriver:2014}
P.~Claes, H.~Hill, and M.~D. Shriver.
\newblock Toward dna-based facial composites: preliminary results and
  validation.
\newblock \emph{Forensic Sci Int Genet}, 13:\penalty0 208--16, 2014a.

\bibitem[Claes et~al.(2014b)Claes, Liberton, Daniels, Rosana, Quillen, Pearson,
  McEvoy, Bauchet, Zaidi, Yao, Tang, Barsh, Absher, ..., and
  Shriver]{claes:etal:2014}
P.~Claes, D.~K. Liberton, K.~Daniels, K.~M. Rosana, E.~E. Quillen, L.~N.
  Pearson, B.~McEvoy, M.~Bauchet, A.~A. Zaidi, W.~Yao, H.~Tang, G.~S. Barsh,
  D.~M. Absher, ..., and M.~D. Shriver.
\newblock Modeling 3d facial shape from dna.
\newblock \emph{PLoS Genet}, 10(3), 2014b.

\bibitem[Craioveanu et~al.(2013)Craioveanu, Puta, and
  RASSIAS]{craioveanu2013old}
M.-E. Craioveanu, M.~Puta, and T.~RASSIAS.
\newblock \emph{Old and new aspects in spectral geometry}, volume 534.
\newblock Springer Science \& Business Media, 2013.

\bibitem[Dai et~al.(2018)Dai, M{\"u}ller, et~al.]{dai2018principal}
X.~Dai, H.-G. M{\"u}ller, et~al.
\newblock Principal component analysis for functional data on riemannian
  manifolds and spheres.
\newblock \emph{The Annals of Statistics}, 46\penalty0 (6B):\penalty0
  3334--3361, 2018.

\bibitem[Dauxois et~al.(1982)Dauxois, Pousse, and
  Romain]{dauxois1982asymptotic}
J.~Dauxois, A.~Pousse, and Y.~Romain.
\newblock Asymptotic theory for the principal component analysis of a vector
  random function: some applications to statistical inference.
\newblock \emph{Journal of multivariate analysis}, 12\penalty0 (1):\penalty0
  136--154, 1982.

\bibitem[Duchi(2016)]{duchi2016lecture}
J.~Duchi.
\newblock Lecture notes for statistics 311/electrical engineering 377.
\newblock \emph{URL: https://stanford. edu/class/stats311/Lectures/full\_notes.
  pdf. Last visited on}, 2:\penalty0 23, 2016.

\bibitem[Edmunds and Triebel(1996)]{Edmunds-96}
D.~E. Edmunds and H.~Triebel.
\newblock \emph{Function Spaces, Entropy Numbers, Differential Operators}.
\newblock Cambridge University Press, 1996.

\bibitem[Ettinger et~al.(2016)Ettinger, Perotto, and
  Sangalli]{ettinger2016spatial}
B.~Ettinger, S.~Perotto, and L.~M. Sangalli.
\newblock Spatial regression models over two-dimensional manifolds.
\newblock \emph{Biometrika}, 103\penalty0 (1):\penalty0 71--88, 2016.

\bibitem[Fan and Reimherr(2017)]{fan2017high}
Z.~Fan and M.~Reimherr.
\newblock High-dimensional adaptive function-on-scalar regression.
\newblock \emph{Econometrics and statistics}, 1:\penalty0 167--183, 2017.

\bibitem[Hall et~al.(2007)Hall, Horowitz, et~al.]{hall2007methodology}
P.~Hall, J.~L. Horowitz, et~al.
\newblock Methodology and convergence rates for functional linear regression.
\newblock \emph{The Annals of Statistics}, 35\penalty0 (1):\penalty0 70--91,
  2007.

\bibitem[Hebey(2000)]{hebey2000nonlinear}
E.~Hebey.
\newblock \emph{Nonlinear analysis on manifolds: Sobolev spaces and
  inequalities}, volume~5.
\newblock American Mathematical Soc., 2000.

\bibitem[Jirak(2016)]{jirak2016optimal}
M.~Jirak.
\newblock Optimal eigen expansions and uniform bounds.
\newblock \emph{Probability Theory and Related Fields}, 166\penalty0
  (3-4):\penalty0 753--799, 2016.

\bibitem[Kang et~al.(2017)Kang, Reimherr, Shriver, and Claes]{kang2017manifold}
H.~B. Kang, M.~Reimherr, M.~Shriver, and P.~Claes.
\newblock Manifold data analysis with applications to high-frequency 3d
  imaging.
\newblock \emph{arXiv preprint arXiv:1710.01619}, 2017.

\bibitem[Lee et~al.(2018)Lee, Miranda, Rausch, Baladandayuthapani, Fazio,
  Downs, and Morris]{lee2018bayesian}
W.~Lee, M.~F. Miranda, P.~Rausch, V.~Baladandayuthapani, M.~Fazio, J.~C. Downs,
  and J.~S. Morris.
\newblock Bayesian semiparametric functional mixed models for serially
  correlated functional data, with application to glaucoma data.
\newblock \emph{Journal of the American Statistical Association}, \penalty0
  (just-accepted):\penalty0 1--38, 2018.

\bibitem[Li et~al.(2010)Li, Hsing, et~al.]{li2010uniform}
Y.~Li, T.~Hsing, et~al.
\newblock Uniform convergence rates for nonparametric regression and principal
  component analysis in functional/longitudinal data.
\newblock \emph{The Annals of Statistics}, 38\penalty0 (6):\penalty0
  3321--3351, 2010.

\bibitem[Lila et~al.(2016)Lila, Aston, Sangalli, et~al.]{lila2016smooth}
E.~Lila, J.~A. Aston, L.~M. Sangalli, et~al.
\newblock Smooth principal component analysis over two-dimensional manifolds
  with an application to neuroimaging.
\newblock \emph{The Annals of Applied Statistics}, 10\penalty0 (4):\penalty0
  1854--1879, 2016.

\bibitem[Lin and Yao(2018)]{lin2018intrinsic}
Z.~Lin and F.~Yao.
\newblock Intrinsic riemannian functional data analysis.
\newblock \emph{arXiv preprint arXiv:1812.01831}, 2018.

\bibitem[Pazouki and Schaback(2011)]{pazouki2011kernelbase}
M.~Pazouki and R.~Schaback.
\newblock Bases for kernel-based spaces.
\newblock \emph{Journal of Computational and Applied Mathematics}, 236\penalty0
  (4):\penalty0 575--588, 2011.

\bibitem[Petrovich and Reimherr(2017)]{petrovich2017asymptotic}
J.~Petrovich and M.~Reimherr.
\newblock Asymptotic properties of principal component projections with
  repeated eigenvalues.
\newblock \emph{Statistics \& Probability Letters}, 130:\penalty0 42--48, 2017.

\bibitem[Reimherr et~al.(2017)Reimherr, Sriperumbudur, and
  Taoufik]{reimherr2017optimal}
M.~Reimherr, B.~Sriperumbudur, and B.~Taoufik.
\newblock Optimal prediction for additive function-on-function regression.
\newblock \emph{arXiv preprint arXiv:1708.03372}, 2017.

\bibitem[Sun et~al.(2017)Sun, Du, Wang, and Ma]{sun2017optimal}
X.~Sun, P.~Du, X.~Wang, and P.~Ma.
\newblock Optimal penalized function-on-function regression under a reproducing
  kernel hilbert space framework.
\newblock \emph{Journal of the American Statistical Association}, \penalty0
  (just-accepted), 2017.

\bibitem[Varshamov(1957)]{varshamov1957estimate}
R.~Varshamov.
\newblock Estimate of the number of signals in error correcting codes.
\newblock \emph{Docklady Akad. Nauk, SSSR}, 117:\penalty0 739--741, 1957.

\bibitem[Wahba(1990)]{wahba1990spline}
G.~Wahba.
\newblock \emph{Spline models for observational data}, volume~59.
\newblock Siam, 1990.

\bibitem[Wang and Ruppert(2015)]{wang2015optimal}
X.~Wang and D.~Ruppert.
\newblock Optimal prediction in an additive functional model.
\newblock \emph{Statistica Sinica}, pages 567--589, 2015.

\bibitem[Zhang and Wang(2018)]{zhang2018optimal}
X.~Zhang and J.-L. Wang.
\newblock Optimal weighting schemes for longitudinal and functional data.
\newblock \emph{Statistics \& Probability Letters}, 138:\penalty0 165--170,
  2018.

\bibitem[Zhang et~al.(2016)Zhang, Wang, et~al.]{zhang2016sparse}
X.~Zhang, J.-L. Wang, et~al.
\newblock From sparse to dense functional data and beyond.
\newblock \emph{The Annals of Statistics}, 44\penalty0 (5):\penalty0
  2281--2321, 2016.

\end{thebibliography}


\clearpage
\appendix
\section{Proof of lower bound}
In the following we give an adaptation of the proof in \cite{cai2012minimax} for the case of general RKHS.  Interestingly, the lower bound is only tight if the harmonic and arithmetic mean are asymptotically equivalent, that is, they grow at the same order with $n$.  This stems from their upper bound being in terms of the harmonic mean, but the arguments for the lower bound lead to the arithmetic mean (if one does not assume the $m_i$ are identical).  
We also provide some extra details for the interested reader.  
To prove the lower bound result, we will employ Fano's lemma and construct an example that achieves the worst case rate.  

Recall that for lower bounds, we need only find one model $M \in \mcM$, that achieves the desired rate.  We can thus make any assumptions we like as long as it remains a valid model.  So, assume only that $\beta_{11} \equiv \beta$ is nonzero and all other $\beta_{k\ell}$ are zero.  Furthermore, assume that only the $\epsilon_{i1} \equiv \epsilon_i$ and $\delta_{ij 1} \equiv \delta_{ij}$ are nonzero.  This reduces the problem from the original $L$ to $L=1$.  So, without loss of generality we can let $L=1$, $P=1$.
Unfortunately, we cannot assume that $X_{i} \equiv 1$, since nowhere did we say that the intercept is always included in the model.   
We assume the radius of $B_\mbK$ is one since it does not play any role in the arguments. In this case the model is given by
\[
Y_i(u_{ij}) = X_i \beta(u_{ij}) + \vep_i(u_{ij}) + \delta_{ij}.
\]
Assume that the distribution of the observed locations $u_{ij}$ has a uniform density with respect to the base measure $\mu$.  Assume that $\vep_i$ are iid mean zero Gaussian processes with covariance function, $C(u,u')$, and that the $\delta_{ij}$ are iid mean zero normals with variance $1$.  
Assume that $C(u,u') \equiv 1$.  
Then each parameter in the RKHS ball, $\beta \in B_\mbK$, induces a Gaussian probability measure over $\mbR^{n m}$.  Consider parameters, $\beta_1,\dots, \beta_M \in B_\mbK$ and their induced probability measures $P_1,\dots,P_M$.  Fano's lemma tells us the following.  
\begin{lemma}[Fano's Lemma]
Let $P_1,\dots,P_M$ be probability measures over $\mbR^{n  m}$ such that
\[
KL(P_i || P_j) \leq \alpha, \qquad i \neq j
\]
then for any test function $\psi:\mbR^{nm} \to \{1,\dots,M\}$ we have
\[
P_i(\psi = i) \leq \frac{\alpha + \log 2}{\log(M-1)}
\qquad \text{or} \qquad
P_i(\psi \neq i) \geq 1- \frac{\alpha + \log 2}{\log(M-1)}.
\]
\end{lemma}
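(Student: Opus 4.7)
The plan is to carry out the classical information-theoretic argument. I would place a uniform prior on the index: introduce a random variable $\Theta$ taking values in $\{1,\dots,M\}$ with $P(\Theta = i) = 1/M$, and let $X \in \mathbb{R}^{nm}$ be drawn from $P_\Theta$ conditional on $\Theta$. Since $\psi$ is a measurable function of $X$ alone, $\Theta \to X \to \psi(X)$ forms a Markov chain, so the data processing inequality gives $I(\Theta;\psi(X)) \leq I(\Theta; X)$. The estimator's error probability, averaged over $i$, is $P_e := \frac{1}{M}\sum_{i=1}^M P_i(\psi(X) \neq i) = P(\psi(X) \neq \Theta)$, and it suffices to show $P_e \geq 1 - \frac{\alpha + \log 2}{\log(M-1)}$, since then the supremum over $i$ satisfies the same bound.

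The first key step is to control $I(\Theta; X)$ in terms of the given pairwise Kullback–Leibler bound. Writing $\bar P = \frac{1}{M}\sum_{j} P_j$ for the marginal law of $X$, one has the identity $I(\Theta; X) = \frac{1}{M}\sum_{i=1}^M KL(P_i \| \bar P)$. By joint convexity of KL (equivalently, convexity of $KL(P\|\cdot)$ in its second argument), $KL(P_i \| \bar P) \leq \frac{1}{M}\sum_{j=1}^M KL(P_i \| P_j) \leq \alpha$, so $I(\Theta; \psi(X)) \leq I(\Theta; X) \leq \alpha$.

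The second key step is the sharp form of Fano's inequality, which I would prove by a short conditioning argument: given $\psi(X) = \hat\theta$, the event $\{\Theta \neq \hat\theta\}$ leaves $\Theta$ with support of size at most $M-1$, so its conditional entropy is at most $\log(M-1)$; combining the two cases via the chain rule for entropy yields $H(\Theta \mid \psi(X)) \leq H(P_e) + P_e \log(M-1)$, where $H(P_e) = -P_e \log P_e - (1-P_e)\log(1-P_e) \leq \log 2$. Using $H(\Theta) = \log M$ (uniform prior) and $H(\Theta) - I(\Theta;\psi(X)) = H(\Theta\mid \psi(X))$, we obtain
\[
\log M - \alpha \leq \log M - I(\Theta;\psi(X)) \leq \log 2 + P_e \log(M-1).
\]

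The final step is a clean rearrangement. Solving for $P_e$ gives $P_e \geq (\log M - \alpha - \log 2)/\log(M-1)$; since $\log M \geq \log(M-1)$, this yields $P_e \geq 1 - (\alpha + \log 2)/\log(M-1)$, which is the claim. I expect the only delicate piece to be the sharper $\log(M-1)$ conditional entropy bound (as opposed to the weaker $\log M$); the KL convexity argument and the data processing step are routine, but care is needed in the entropy decomposition to retain $\log(M-1)$ rather than $\log M$. One should also note that the statement is vacuous for $M \leq 2$, so $M \geq 3$ is implicit.
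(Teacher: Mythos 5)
Your proof is correct and follows the standard information-theoretic route. The paper itself does not prove this lemma --- it is stated as a known result and immediately applied in the lower-bound construction --- so there is nothing in the paper to compare your argument against. The sequence of steps you lay out (uniform prior on the index, data-processing inequality for the Markov chain $\Theta \to X \to \psi(X)$, the bound $I(\Theta;X) \leq \alpha$ via convexity of $KL(P\,\|\,\cdot)$ in the second argument applied to the mixture $\bar P$, the sharp Fano inequality $H(\Theta \mid \psi(X)) \leq H(P_e) + P_e\log(M-1)$ obtained from the chain rule with the error indicator, and the final rearrangement using $\log M \geq \log(M-1)$) is exactly the canonical derivation, and every step checks out. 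You have also correctly observed that the argument controls the averaged error $P_e$ and hence the maximum over $i$, which is all the downstream minimax argument requires, and that $M \geq 3$ is implicit so that $\log(M-1) > 0$; both points are appropriate caveats to record.
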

In other words, Fano's lemma gives us an upper bound on the estimation accuracy for any possible test we could construct to select the true $\beta$ from among the $\beta_1,\dots,\beta_M$.  Any estimator, $\hat \beta$, we could construct in this setting would be equivalent to choosing one of the $\beta_1,\dots,\beta_M$.  Thus, in this case the estimation error must be at least
\[
\E_{P_i}\|\hat \beta - \beta_i\|^2
\geq P_i(\hat \beta \neq \beta_i) \min_{i,j}\| \beta_i - \beta_j\|^2
\geq \left(1- \frac{\alpha + \log 2}{\log(M-1)} \right) \min_{i,j}\| \beta_i - \beta_j\|^2
\]
for any estimator.  So, applying Fano's lemma becomes a matter of selecting $\beta_1,\dots,\beta_M$ that are well separated while properly balancing the KL divergence.  

First turning to the KL divergence, recall that between two Gaussian random vectors, $N(\mu_1,\Sigma)$ and $N(\mu_2,\Sigma)$, it is given by $(1/2)(\mu_1 - \mu_2)\Sigma^{-1}(\mu_1 - \mu_2).$  In addition each $P_i$ is composed of $n$ indepdent Gaussian measures over the product space $\mbR^{m_1} \times\dots \times \mbR^{m_n}$, over which the KL divergence is additive.  Let $\bu_i \sim (u_{i1},\dots,u_{im_i})$ and denote $\beta_j(\bu_i) = (\beta_j(u_{1i}),\dots,\beta_j(u_{jm_i}))^\top$, and $\Sigma(\bu_i) =\{C(u_{ij},u_{ik}) + \sigma_0^2 1_{j=k}\}$.  By assumption we can write
\[
\Sigma(\bu_i) = \bI_{m_i} + 1_{m_i} 1_{m_i}^\top,
\]
where $\bI_{m_i}$ is the $m_i$ dimensional identity matrix and $1_{m_i}$ is a vector of ones of length $m_i$.  Using the {\it Sherman-Morris formula} from linear algebra, we have
\[
\Sigma(\bu_i)^{-1} = \bI_{m_i}  - \frac{1}{m_i+1} 1_{m_i} 1_{m_i}^\top.
\]
Turning the KL divergence we have
\begin{align*}
KL(P_i||P_j)
& = \frac{1}{2} \sum_{k=1}^n X_k^2 \E[(\beta_i(\bu_k) - \beta_j(\bu_k))^\top \Sigma(\bu_k)^{-1}(\beta_i(\bu_k) - \beta_j(\bu_k))]
\end{align*}
From the Sherman-Morris formula, the above can be expressed as the sum of two components.  The first is
\begin{align*}
    \E[(\beta_i(\bu_k) - \beta_j(\bu_k))^\top \bI_{m_k}(\beta_i(\bu_k) - \beta_j(\bu_k))]
    =  m_k \| \beta_i - \beta_j\|^2.
\end{align*}
While the second is given by
\begin{align*}
    &\E[((\beta_i(\bu_k) - \beta_j(\bu_k))^\top 1_{m_k} )^2] \\
    & = m_k \| \beta_i - \beta_j\|^2 + m_k(m_k-1) \left( \langle 1, \beta_i - \beta_j \rangle \right)^2
     \geq m_k \| \beta_i - \beta_j\|^2.
\end{align*}
Putting the two together, we get a bound on the KL divergence of the form
\begin{align*}
\frac{\|\beta_i - \beta_j\|^2}{2} \sum_{k=1}^n X_k^2 \left( m_k - \frac{m_k}{1+m_k} \right)
=\frac{\|\beta_i - \beta_j\|^2}{2} \sum_{k=1}^n X_k^2 \frac{m_k^2}{1+m_k}
\leq \frac{\|\beta_i - \beta_j\|^2 n m_a \zeta^2}{2},
\end{align*}
where $m_a$ is the arithmetic mean and from Assumption $\ref{a:main}$ $X_k^2 \leq \zeta^2$.  Our estimation error is then bounded from below by
\[
\left( 1 - \frac{(\zeta^2/2) n m_a  \max_{ij}\|\beta_i - \beta_j\|^2 + \log(2)}{\log(M-1)} \right) \min_{ij}\|\beta_i - \beta_j\|^2.
\]
We want to make this error as large as possible (since that would produce the tightest lower bound), under the constraint that $\beta_i \in B_\mbK$.  
To construct a viable sequence, we consider the Varshamov-Gilbert bound \citep{varshamov1957estimate,duchi2016lecture}.
\begin{lemma}[Varshamov-Gilbert]
For $N \geq 1$ there exists at least $M = \exp(N/8)$ $N$-dimensional vectors, $b_1,\dots,b_M$, with entries $b_{ik} \in \{0,1\}$ such that
\[
\sum_{k=1}^N 1\{b_{ik} \neq b_{jk}\} \geq N/4.
\]
\end{lemma}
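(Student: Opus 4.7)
The plan is to prove this combinatorial statement via a greedy packing argument in the Hamming cube, with the volume of a Hamming ball controlled by Hoeffding's inequality. Throughout, let $d_H(b, c) = \sum_{k=1}^N 1\{b_k \neq c_k\}$ denote the Hamming distance on $\{0,1\}^N$, and let $B(b, r) = \{c \in \{0,1\}^N : d_H(b, c) \leq r\}$ be the Hamming ball of radius $r$ about $b$. By translation symmetry of the cube, the cardinality $V(r) := |B(b, r)| = \sum_{k=0}^{\lfloor r \rfloor} \binom{N}{k}$ is independent of the center.

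First, I would construct the sequence greedily. Pick any $b_1 \in \{0,1\}^N$. Having selected $b_1, \ldots, b_t$, if there exists $b \in \{0,1\}^N$ with $d_H(b, b_i) \geq N/4$ for every $i \leq t$, choose any such $b$ as $b_{t+1}$; otherwise stop. Let $M$ be the number of vectors produced. The stopping rule says that every $c \in \{0,1\}^N$ satisfies $d_H(c, b_i) < N/4$ for some $i \leq M$, and since $d_H$ is integer valued this forces $c \in B(b_i, N/4)$. Thus the $M$ balls cover the cube, which yields $M \cdot V(N/4) \geq 2^N$, and hence $M \geq 2^N / V(N/4)$.

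Second, I would bound $V(N/4)$ probabilistically. A uniformly random element of $\{0,1\}^N$ lies in $B(0, N/4)$ precisely when a Binomial$(N, 1/2)$ random variable $X$ satisfies $X \leq N/4$, so $V(N/4)/2^N = P(X \leq N/4)$. Since $\E[X] = N/2$, Hoeffding's inequality for sums of $[0,1]$-valued random variables gives
\[
P(X \leq N/4) \;=\; P\!\left(X - N/2 \leq -N/4\right) \;\leq\; \exp\!\left(-\frac{2(N/4)^2}{N}\right) \;=\; \exp(-N/8).
\]
Combining this with the greedy volume inequality yields $M \geq 2^N / (2^N \exp(-N/8)) = \exp(N/8)$, which is the claimed bound.

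The argument is essentially elementary; the only small subtlety is verifying that at termination the cover uses closed balls of radius $N/4$ (rather than $N/4 - 1$), which relies on $d_H$ being integer valued together with the non-strict inequality ``$\geq N/4$'' in the statement. Beyond this, no machinery apart from simple counting and the standard Hoeffding bound is needed, so the proof is clean and self-contained.
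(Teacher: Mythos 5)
The paper does not prove this lemma; it invokes it with citations to Varshamov (1957) and Duchi's lecture notes and moves on, so there is no proof of its own against which to compare. Your proof is correct and self-contained. The greedy selection produces $b_1,\dots,b_M$ pairwise at Hamming distance at least $N/4$; at termination every $c \in \{0,1\}^N$ satisfies $d_H(c, b_i) < N/4$ for some $i$, hence lies in $B(b_i, N/4)$, giving the covering bound $M \cdot V(N/4) \geq 2^N$. Identifying $V(N/4)/2^N$ with $P(X \leq N/4)$ for $X \sim \mathrm{Binomial}(N, 1/2)$ and applying Hoeffding's one-sided inequality gives $V(N/4)/2^N \leq e^{-N/8}$, so $M \geq e^{N/8}$, which is exactly the claim. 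This greedy-packing-plus-volume argument is the classical route to Gilbert--Varshamov type bounds and matches the spirit of the cited sources. One small correction to your closing remark: the containment $\{c : d_H(c, b_i) < N/4\} \subseteq B(b_i, N/4)$ is automatic for any metric, so integer-valuedness of $d_H$ is not actually needed at that step; it would only come into play if you wanted to tighten the cover to balls of strictly smaller radius and thereby sharpen the constant, which the stated bound does not require.
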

This is a commonly used lemma for constructing collections of parameters for minimax results as they take a very simple form.  We can use these sequences to construct elements of $L^2(\mcU)$ in the $v_k$ basis.  A suitable choice turns out to be the following:
\[
\beta_i(u) := N^{-1/2} \sum_{k=N + 1}^{2N} \tau_k^{1/2} b_{i,k - N} v_k(u).
\]
We clearly have the following properties
\begin{align*}
 \| \beta_i\|_\mbK^2 \leq 1, \qquad
 \| \beta_i - \beta_j\|^2 \geq  \tau_{2N} /4, \qquad 
 \| \beta_i - \beta_j\|^2 \leq \tau_{N}.
\end{align*}
Using this sequence, the lower bound becomes
\begin{align*}
& \left( 1 - \frac{(\zeta^2 /2) n m_a  \tau_{N} + \log(2)}{N/8} \right) \tau_{2N} 
 \asymp
\left( 1 - \frac{ 4 \zeta^2 n m_a N^{-2h} + 8 \log(2)}{N} \right) (2N)^{-2h}.
\end{align*}
Taking $N = (8 \zeta^2 n m_a)^{1/(1+2h)}$, which implies $N \to \infty$, would produce
\[
\left(\frac{1}{2} - \frac{8 \log(2)}{N} \right) (2N)^{-2h} 
 \asymp (n m_a)^{-2h/(1+2h)},
\]
which is the desired bound as long as $m_a \asymp m$.  
This bound (as we will see) matches the upper bound in the case where $m \asymp m_a$ and $m \ll n^{1/2h}$ or is of the same order, giving a tight rate. However, in the case where $m \gg n^{1/2h}$, then the bound is loose.

To obtain a bound that works when $m \gg n^{1/2h}$ we can make the problem even simpler. Assume that $\beta_j(u) \equiv b_j$, meaning that there are no dynamics in time (one just has a repeated measures problem).  A simple verification shows that the vector of all ones is an eigenvector with eigenvalue $m_i+1$. This implies that the KL divergence is now bounded by
\[
KL(P_i || P_j) \leq \frac{(b_i - b_j)^2 }{2} \sum_{k=1}^n \frac{m_k X_i^2}{m_k + 1}
\leq \frac{n \zeta^2 (b_i - b_j)^2}{2}.
\] 
Now we actually only need four test values in this case.  Take $b_i = \delta (i-1)/(3\sqrt n)$ for $i=1,2,3,4$ and let $\delta^2/2 =  (3/4) \log(3) - \log(2)$.  Then we can bound the KL divergence as $\delta^2/2$, the resulting lower bound on the estimation error is
\[
\left( 1 - \frac{\delta^2/2 + \log(2)}{\log(3)}\right) \frac{\delta^2}{9 n} = 
\frac{\delta^2}{36 n} \asymp n^{-1},
\]
as desired.

\section{Proof of upper bound}	

Since each coordinate of the response can be estimated separately, we will assume wlog that $L=1$ in our proof.  We also assume, wlog, that $u_{ij}$ have a density identically equal to 1, meaning their law is given by $\mu$.  
 We assume the kernel $K(u,s)$ is continuous over $\mcU$, which means it is also bounded since $\mcU$ is compact.  Using Mercer's theorem it admits the spectral decomposition
\begin{align}
K(u,s) = \sum_{k=1}^\infty \tau_k v_k(u) v_k(s). \label{e:eigen}
\end{align}
We assume that eigenvalues decay as
\[
\tau_k \asymp k^{-2h},  
\]
for some $h \geq 1$. 
Recall that, by Mercer's theorem, the convergence above occurs uniformly and absolutely in $u$ and $s$.  We therefore have the following lemma, which will be used throughout.
\begin{lemma} \label{l:erate}
If $K(u,s)$ is a continuous, positive definite, and symmetric kernel then it admits the eigen-decomposition \eqref{e:eigen}, which satisfies
\[
\sup_{t,s}\tau_k |v_k(u) v_k(s)| \to 0 \quad \text{as k} \to \infty.
\]
\end{lemma}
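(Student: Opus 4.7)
The plan is to derive this entirely from the strong form of Mercer's theorem already invoked in the text: for a continuous, symmetric, positive definite kernel on the compact domain $\mcU\times\mcU$, the expansion $K(u,s)=\sum_{k=1}^{\infty}\tau_k v_k(u)v_k(s)$ converges both absolutely and uniformly. Once that is in hand, the conclusion that each term tends to zero uniformly is an elementary consequence of the uniform convergence of the partial sums.

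Concretely, I would set $S_N(u,s) := \sum_{k=1}^{N}\tau_k v_k(u) v_k(s)$ and, writing $\|\cdot\|_\infty$ for the supremum norm on $\mcU\times\mcU$, observe that Mercer's theorem gives $\|S_N - K\|_\infty \to 0$. The $k$th term of the series is
\[
f_k(u,s) := \tau_k v_k(u) v_k(s) = S_k(u,s) - S_{k-1}(u,s),
\]
so the triangle inequality produces
\[
\|f_k\|_\infty \leq \|S_k - K\|_\infty + \|K - S_{k-1}\|_\infty,
\]
and both terms on the right vanish as $k\to\infty$. This is exactly $\sup_{u,s}\tau_k|v_k(u)v_k(s)| \to 0$, which is the claim (with the $t$ in the statement evidently a typo for $u$). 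An equivalent route, which may be preferable to emphasize since the text explicitly cites absolute uniform convergence, is to note that $\sum_k \tau_k\|v_k\otimes v_k\|_\infty$ is a convergent numerical series of nonnegative terms, and hence its general term tends to zero.

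The only point requiring care is verifying that Mercer's theorem applies in the present setting, since the paper takes $\mcU$ to be a compact $d$-dimensional manifold equipped with a finite Borel measure $\mu$ whose support is $\mcU$, rather than a compact subset of Euclidean space with Lebesgue measure as in the classical statement. However, the Mercer expansion is known to hold at this level of generality — continuous positive definite kernels on any compact Hausdorff (in particular, second countable compact metric) space with a finite, strictly positive Borel measure — so the hypotheses in Assumption \ref{a:main} are enough and the argument above goes through with no real obstacle. The entire proof should thus fit in a few lines.
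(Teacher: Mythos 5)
Your primary argument is correct and is exactly what the paper has in mind: the lemma is stated immediately after the authors recall that, by Mercer's theorem, the expansion $K(u,s)=\sum_k \tau_k v_k(u)v_k(s)$ converges uniformly on $\mcU\times\mcU$, and the paper leaves the inference to the reader. Your telescoping step, $\|f_k\|_\infty \le \|S_k-K\|_\infty + \|K-S_{k-1}\|_\infty \to 0$, is the standard and intended deduction; equivalently one may restrict to the diagonal and use $\sup_{u,s}\tau_k|v_k(u)v_k(s)| = \sup_u \tau_k v_k(u)^2$, where Dini's theorem gives uniform convergence of $\sum_k \tau_k v_k(u)^2$ to $K(u,u)$. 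Your remark about Mercer applying on a compact Hausdorff space with a finite strictly positive Borel measure, rather than only on compact subsets of $\mbR^d$, is also the right thing to check and is indeed covered by the general form of the theorem.

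Be careful, however, with the route you describe as ``preferable.'' The claim that $\sum_k \tau_k\|v_k\otimes v_k\|_\infty$ converges is strictly stronger than the uniform (or absolute-and-uniform) convergence that Mercer provides. Uniform convergence of the partial sums $S_N$ does not imply convergence of the series of sup-norms of the individual terms; that would be normal convergence in the sense of the Weierstrass $M$-test, which Mercer's theorem does not assert. Concretely, Dini gives $\|S_N - K\|_\infty \to 0$, hence $\|f_k\|_\infty \to 0$, but there is in general no bound forcing $\sum_k \|f_k\|_\infty < \infty$; whether it holds depends on the growth of $\sup_u|v_k(u)|$, which for eigenfunctions of kernels on manifolds need not be bounded. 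So you should drop or correct that alternative and keep only the telescoping argument, which is exactly what the lemma requires and no more.
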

The use of this Lemma \ref{l:erate} is what allows us to relax the assumptions on the error process as compared to \cite{cai2012minimax}, as it allows us to avoid certain Cauchy-Schwarz inequalities involving the errors (note it also fixes one misapplication of the Cauchy-Schwarz they had in their proofs).  The functions $v_k(u)$ are normalized to have $L^2(\mcU)$ norm one (from here on we notationally drop the domain $\mcU$), which also means they have $\mbK$ norm $\tau_k^{-1/2}$.  Recall that the $\mbK$ inner product can be expressed as
\[
\langle g, f\rangle_{\mbK} = \sum_{k=1}^\infty \frac{\langle f, v_k \rangle \langle g, v_k \rangle}{\tau_k},
\]
where norms and inner products without subscripts will always denote the $L^2$ norm.

We now define the biased population parameter that will act as an intermediate value in our asymptotic derivation.  First, define the population counterpart to $\bT_{nm}$ from Section \ref{s:up} as
\[
[\bT \bff](u) := \E[ K_{u_{11}}(u)  \bSig_X \bff(u_{11})] = \int K(u, s)  \bSig_X \bff(s) \ d \mu(s)
\]
and $\bh = \bT (\bbeta_0)$. 
We then define
\begin{align}
\bbeta_\lambda = (\bT + \lambda \bI)^{-1} \bh = (\bT + \lambda I)^{-1} \bT \bbeta_0. \label{e:glam}
\end{align}

We now define a final intermediate value as
\begin{align}
\tilde \bbeta_\lambda 
= \bbeta_\lambda + (\bT+\lambda \bI)^{-1}(\bh_{nm} - \bT_{nm}(\bbeta_\lambda)  - \lambda \bbeta_\lambda). \label{e:gtilde}
\end{align}
To establish our convergence rates we break up the problem into three pieces:
\[
\hat \bbeta - \bbeta_0 =(\bbeta_\lambda -\bbeta_0) + (\tilde \bbeta_\lambda - \bbeta_\lambda) +  (\hat \bbeta - \tilde \bbeta_\lambda)  .
\]
In order to establish bounds for the third term above, it will be necessary to bound the second term in terms of the norm $\| f \|_\alpha = \langle K^{-\alpha/2} f, K^{-\alpha/2} f \rangle$.  When $\alpha = 0$ this is the $L^2$ norm, when $\alpha = 1$ it is the $\mbK$ norm, but we allow intermediate values $\alpha \in [0,1]$.

\subsection*{Step 1: $\bbeta_\lambda - \bbeta_0$}
Using \eqref{e:glam} we have
\[
\bbeta_\lambda - \bbeta_0 = [(\bT+\lambda \bI )^{-1}\bT - \bI] \bbeta_0 = -\lambda  (\bT+\lambda \bI)^{-1}  \bbeta_0.
\]
We want to compute the norm of this quantity in the product space $(L^2)^P$, which, equivalently, can be thought of as the tensor product space $\mbR^P \otimes L^2$.  We can make this calculation cleaner by using an appropriate basis.  In particular, recall that $v_k$ are the eigenfunctions of $K$, and we can add to them the eigenvectors of $\Sigma_X$, denoted as $\bu_p$, we can then construct a basis for the space as
\[
\{\bu_p \otimes v_k: p=1,\dots,P \ k=1,\dots, \infty\}.
\]
If we let $ \eta_p$ denote eigenvalues of $ \Sigma_X$, then the eigenvalues of $(\bT+\lambda I)$ are $ \eta_p\tau_k + \lambda$ and the eigenfunctions are $\bu_p \otimes v_k$.  Applying Parceval's identity yields
\begin{align*}
\| \bbeta_\lambda - \bbeta_0\|^2  
& = \sum_{p=1}^P \sum_{k=1}^\infty \langle \lambda  (\bT+\lambda \bI)^{-1}  \bbeta_0, \bu_p \otimes v_k \rangle^2\\
&= \lambda^2 \sum_p \sum_k \frac{1}{  ( \eta_p \tau_k + \lambda )^2} \langle \bbeta_0, \bu_p \otimes v_k \rangle^2 \\
&=  \lambda^2 \sum_p \sum_k \frac{\tau_k}{( \eta_p \tau_k + \lambda )^2}  \frac{\langle \bbeta_0, \bu_p \otimes v_k \rangle^2}{\tau_k} \\
&\leq \lambda^2  \| \bbeta_0\|^2_\mbK \sup_{pk} \frac{\tau_k}{(\eta_p \tau_k + \lambda)^2}
\leq \lambda^2 \nu \| \bbeta_0\|^2_\mbK \sup_{pk} \frac{\eta_p \tau_k}{(\eta_p \tau_k + \lambda)^2}.
\end{align*}
To bound the sup consider the function $f(x)=x^\gamma(x+\lambda)^{-2}$, over $x \geq 0$ and for some fixed $\gamma > 0$ (this level of generality will be useful later on).  Notice that this function will attain its maximum at a finite value of $x$ if and only if $\gamma < 2$, for $\gamma \geq 2$ the maximum is attained at infinity.  The derivative is given by 
\[
\gamma x^{\gamma-1}(x+\lambda)^{-2}-2x^\gamma(\lambda+x)^{-3}.
\]
Setting equal to zero we have
\[
\gamma (\lambda+x)-2x=0 \Longrightarrow x =  \frac{\gamma}{2-\gamma}\lambda .
\]
So we have 
\begin{align}
\sup \frac{(\eta_p\tau_k)^{\gamma}}{(\eta_p\tau_k + \lambda)^2}
\leq c_0 \lambda^{\gamma-2}. \label{e:eigbound}
\end{align}
Note that throughout we take $c_0, c_1$, etc, to denote generic constants whose exact values may change depending on the context.
Taking $\gamma = 1$ we conclude that
\begin{align}
\| \bbeta_\lambda - \bbeta_0\|^2 \leq c_0 \lambda \nu  \| \bbeta_0\|^2. \label{e:bias}
\end{align}

\subsection*{Step 2: $\tilde \bbeta_\lambda - \bbeta_\lambda$}
In this part we will bound the difference more generally using the $\alpha$ norm for $\alpha < 1-1/2h$. 
First, recall that, by definition of $\bbeta_\lambda$ we have
\[
\bT \bbeta_\lambda + \lambda  \bbeta_\lambda = \bh \Longrightarrow
\lambda \bbeta_\lambda = \bh - \bT \bbeta_\lambda = \bT (\bbeta_0 -  \bbeta_\lambda).
\]
Plugging this into \eqref{e:gtilde}, the expression for $\tilde \bbeta_\lambda$, we obtain
\[
\tilde \bbeta_\lambda - \bbeta_\lambda
= (\bT + \lambda \bI)^{-1} \left[ \bh_{nm} - \bT_{nm} \bbeta_\lambda - (\bT \bbeta_0 - \bT \bbeta_\lambda)
\right].
\]
This quantity has mean zero since, using \eqref{e:hnm} we have 
\[
\E[\bh_{nm}](u) 
= \frac{1}{n} \sum_i \frac{1}{m_i} \sum_j
\bX_i \bX_i^\top \E[\bbeta(u_{ij})  K_{u_{ij}}(u)]
=(\bT \bbeta_0)(u).
\]
and using \eqref{e:Tnm} we have
\[
\E[\bT_{nm}\bbeta_\lambda](u) 
= (\bT \bbeta_\lambda)(u).
\]
Using Parceval's identity we can express the expected difference in the $\alpha$ norm as
\[
\E\|\tilde \bbeta_\lambda - \bbeta_\lambda\|^2_\alpha
= \sum_p \sum_k  \frac{1}{\tau_k^{\alpha} (\eta_p \tau_k + \lambda )^2 } \Var(\langle \bh_{nm} - \bT_{nm} \bbeta_\lambda, \bu_p \otimes v_k \rangle). 
\]
Using the assumed independence across $i$ and the definitions \eqref{e:hnm} and \eqref{e:Tnm} we have
\begin{align*}
 & \Var(\langle \bh_{nm} - \bT_{nm} \bbeta_\lambda, \bu_p \otimes v_k \rangle)   \\
  & =  \frac{1}{n^2} \sum_i \frac{1}{m_i^2}\Var \left(
\sum_\ell (Y_{i\ell} - \bX_i^\top \bbeta_\lambda(u_{i\ell})) \langle K_{u_{i\ell}}, v_k \rangle \bX_i^\top \bu_j \right).
\end{align*}
Using the reproducing property and that the $v_k$ are the eigenfunctions of $K$, we can express $\langle K_{u_{ij}}, v_k \rangle = \tau_k \langle K_{u_{ij}} , v_k \rangle_{\mbK} = \tau_k v_k(u_{ij})$.  So the above is bounded by
\begin{align*}
& \frac{\tau_k^2 }{n^2} \sum_i \frac{(\bX_i^\top \bu_j)^2}{m_i^2}\Var \left(
\sum_\ell (Y_{i\ell} - \bX_i^\top \bbeta_\lambda(u_{i\ell})) v_k(u_{i \ell}) \right) \\
&\leq 
\frac{\tau_k^2 P \zeta^2 }{n^2} \sum_i \frac{1}{m_i^2}\Var \left(
\sum_\ell (Y_{i\ell} - \bX_i^\top \bbeta_\lambda(u_{i\ell})) v_k(u_{i \ell}) \right).
\end{align*}

  Conditioning on the sigma algebra generated by the locations, $\mcF = \sigma\{u_{ij}\}$, we get
\begin{align*}
\Var \left(
\sum_j (Y_{ij} - X_i^\top \bbeta_\lambda(u_{ij}))  v_k(u_{ij})\right)
& = \Var \left( \E \left[
\sum_j (Y_{ij} - \bX_i^\top \bbeta_\lambda(u_{ij}))  v_k(u_{ij})
\biggr| \mcF \right] \right) \\
& + \E \left[ \Var \left(
\sum_j (Y_{ij} - \bX_i^\top \bbeta_\lambda(u_{ij}))  v_k(u_{ij})
\biggr| \mcF \right) \right].
\end{align*}
The first term is given by
\begin{align*}
\Var \left(
\sum_j \bX_i^\top (\bbeta_0(u_{ij}) - \bbeta_\lambda(u_{ij}))  v_k(u_{ij})
\right)
& = m_i \Var(\bX_i^\top (\bbeta_0(u_{11}) - \bbeta_\lambda(u_{11})  )v_k(u_{11}) ) \\
& \leq m_i  \E(\bX_i^\top (\bbeta_0(u_{11}) - \bbeta_\lambda(u_{11})) v_k(u_{11}) )^2 \\
& = m_i  \int [\bX_i^\top (\bbeta_0(u) - \bbeta_\lambda(u))]^2 v_k(u)^2 \ d \mu (u) \\
& \leq m_i |\bX_i|^2 \|\bbeta_0 - \bbeta_\lambda\|^2 \sup_u v_k(u)^2 \\
 & \leq c_0 P \zeta^2 m_i \tau_k^{-1} \lambda \| \bbeta_0 \|_{\mbK}^2.
\end{align*}
Note the last line follows from Lemma \ref{l:erate} and equation \eqref{e:bias}.

Turning to the second term, we have
\begin{align*}
\Var \left(
\sum_j (Y_{ij} - \bX_i^\top \bbeta_\lambda(u_{ij}))  v_k(u_{ij})
\biggr| \mcF  \right)
&= \sum_{j \ell} \Cov(Y_{ij}, Y_{i \ell} | \mcF) v_k(u_{ij}) v_k(u_{i \ell})\\
& = \sum_{j \ell} (C(u_{ij}, u_{i \ell}) + \sigma^2 1_{j=\ell} )v_k(u_{ij}) v_k(u_{i \ell}).
\end{align*}
When $j =  \ell$ we use the assumed bounded variance and the orthonormality of the $v_k$ to obtain
\begin{align*}
 \E[  (C(u_{ij}, u_{ij}) + \sigma^2 )v_k(u_{ij})^2] 
& =\int C(u,u) v_k(u)^2 \ du + \sigma^2
\leq c_0.
\end{align*}
When $j \neq \ell$ we use the definition of the covariance to obtain
\begin{align*}
\E[  (C(u_{ij}, u_{i \ell}) v_k(u_{ij} )  v_k(u_{i \ell}) ] 
& = \int \int  v_k(u)C(u,s) v_k(s)  \ ds du\nonumber\\
&= \langle v_k , C v_k \rangle 
= \E \langle \vep, v_k \rangle^2.
\end{align*}

Using generic $\{c_i\}$ for the constants and recalling that $m$ is the harmonic mean of the $m_i$ we get the bound 
\begin{align}
& \E \| \tilde \bbeta_\lambda - \bbeta_\lambda \|^2_\alpha \notag \\
&\leq 
\sum_p \sum_k \frac{ \tau_k^{2-\alpha}}{ (\eta_p \tau_k+ \lambda )^2} \frac{1}{n^2} \sum_{i=1}^{n} \frac{1}{m_i^2} 
\left[
\frac{c_0 m_i \lambda}{\tau_k} + m_i c_1 + m_i^2 \E \langle \vep , v_k \rangle^2
\right] \notag \\
& = \sum_p \sum_k \frac{ \tau_k^{2-\alpha}}{ (\eta_p \tau_k+ \lambda )^2} \frac{1}{n}\left[ \frac{\lambda }{m \tau_k} c_0 +    \frac{1}{m }c_1 +  \E \langle \vep, v_k \rangle^2
\right]. \label{e:bsum}
\end{align}
We bound each term in the summand separately.  If  $\tau_k \asymp k^{-2h}$ then so is $\eta_p \tau_k$, since $1 \leq p \leq P$.  For an arbitrary $\gamma > 1/2h$ we have 
\[
\sum_{k=1}^\infty \frac{ \tau_k^{\gamma}}{ ( \eta_p \tau_k + \lambda)^2}
\asymp \int_0^\infty \frac{x^{-2h\gamma}}{(\lambda + x^{-2h})^2} \ dx 
= \int \frac{x^{2h(2 - \gamma)}}{(\lambda x^{2h} + 1)^2} \ dx.
\]
Let $y = \lambda x^{2h}$ then $x = \lambda^{-1/2h} y^{1/2h}$ and $dx = \lambda^{-1/2h} (1/2h) y^{1/2h - 1} dy$.  Then the above becomes
\[
\int \frac{\lambda^{-(2-\gamma) }y^{2-\gamma}}{(y+1)^2}\lambda^{-1/2h} (1/2h) y^{1/2h - 1} dy
= \frac{\lambda^{-(2-\gamma+1/2h)}}{2h}\int \frac{y^{1-\gamma+1/2h}}{(y+1)^2} dy.
\]
Notice the integral is finite since $\gamma > 1/2h$.  We therefore have that, for any $\gamma > 1/2h$ and $p=1,\dots,P$, 
\begin{align}
\sum_{k=1}^\infty \frac{ \tau_k^{\gamma}}{ (\eta_p \tau_k + \lambda)^2}
\asymp \lambda^{-(2-\gamma+1/2h)}. \label{e:boundsum}
\end{align}

Taking $\gamma = 1 - \alpha$ and applying \eqref{e:boundsum}, which is  greater than $1/2h$  as long as $\alpha < 1 - 1/2h$, the first term in \eqref{e:bsum} is given by
\[
 \sum_{p=1}^P  \sum_{k=1}^\infty \frac{ \tau_k^{1-\alpha}}{ (\eta_p \tau_k+ \lambda )^2} \frac{\lambda c_0}{n m} = O( \lambda^{-\alpha - 1/2h} (nm)^{-1}).
\]
 Turning to the second term in \eqref{e:bsum}, take $\gamma = 2-\alpha$ we have by the same arguments
 \[
 \frac{c_2}{nm} \sum_p \sum_k \frac{\tau_k^{2-\alpha}}{(\eta_p \tau_k + \lambda)^2} \asymp (nm)^{-1} \lambda^{-\alpha - 1/2h}.
 \]
 Turning to the last term in \eqref{e:bsum} we can use that $\E \| \vep \|^2 < \infty$  to obtain
 \[
 \sum_p \sum_{k=1}^\infty \frac{ \tau_k^{2-\alpha}}{ (\eta_p \tau_k+ \lambda )^2} \frac{1}{n} \E \langle \vep, v_k \rangle^2
 \leq \E\| \vep\|^2 n^{-1} \nu^{2-\alpha} \max_k \frac{ (\eta_p \tau_k)^{2-\alpha}}{ (\tau_k+ \lambda )^2}. 
 \]
 Applying \eqref{e:eigbound} with $\gamma =2 - \alpha$ we have that the above is equivalent to 
  \[
 \E\| \vep\|^2 n^{-1} c_0 \lambda^{-\alpha},
 \]

 We thus conclude that
\[
\| \tilde \bbeta_\lambda - \bbeta_\lambda \|^2_\alpha = O_P \left( (nm)^{-1} \lambda^{-\alpha - 1/2h} + n^{-1} \lambda^{-\alpha} \right).
\]
There will be two values of $\alpha$ that are especially important.  The first is when $\alpha = 0$, which we use to bound the $L^2$ norm, while the second is for an arbitrary $\alpha$ that satisfies $1/2h < \alpha < 1 - 1/2h$, as this will be used to bound the last term in the next subsection.

\subsection*{Step 3: $\hat \bbeta - \tilde \bbeta$}
Recall that $\hat \bbeta = (\bT_{nm} + \lambda \bI)^{-1} \bh_{nm}$ and $ \tilde \bbeta =  \bbeta_\lambda + (\bT+\lambda \bI)^{-1}(\bh_{nm} - \bT_{nm}(\bbeta_\lambda)  - \lambda \bbeta_\lambda)$.  Note that this also implies that $\bh_{nm} = (\bT_{nm} + \lambda \bI) \hat \bbeta$. So write
\begin{align*}
\hat \bbeta - \tilde \bbeta
& = \hat \bbeta - \bbeta_\lambda -  (\bT+\lambda \bI)^{-1}(\bh_{nm} - \bT_{nm}(\bbeta_\lambda)  - \lambda \bbeta_\lambda) \\
& =   (\bT+\lambda \bI)^{-1}
\left(
 (\bT+\lambda \bI)( \hat \bbeta - \bbeta_\lambda)
- (\bh_{nm} - (\lambda \bI + \bT_{nm})\bbeta_\lambda) ) 
\right) \\
&=  (\bT+\lambda \bI)^{-1}
\left(
 (\bT+\lambda \bI)( \hat \bbeta - \bbeta_\lambda)
- (\bT_{nm} + \lambda \bI) (\hat \bbeta - \bbeta_\lambda )
\right).
\end{align*}
Computing the $\alpha$ norm we can apply Parseval's and the definition of $\bT_{nm}$ to obtain
\begin{align*}
& \| \hat \bbeta - \tilde \bbeta\|^2_\alpha \\
& = 
\sum_p \sum_k \frac{\tau_k^{-\alpha}}{(\eta_p\tau_k+\lambda )^{2}} \left[
(\tau_k+ \lambda) \langle \hat \bbeta - \bbeta_\lambda, \bu_p\otimes v_k \rangle
- \langle (\bT_{nm} + \lambda \bI) (\hat \bbeta - \bbeta_\lambda ), \bu_p \otimes v_k \rangle
\right]^2 \\
& = \sum_p\sum_k \frac{\tau_k^{2-\alpha}}{(\eta_p \tau_k+\lambda )^{2}} \left[
 \langle \hat \bbeta - \bbeta_\lambda, \bu_p \otimes v_k \rangle
- \frac{ 1}{n} \sum_{i=1}^n \frac{1}{m_i} \sum_{j=1}^{m_i}  \bu_p^\top (\hat \bbeta(u_{ij}) - \bbeta_\lambda(u_{ij} )) v_k(u_{ij}) 
\right]^2.
\end{align*}

Notice that we can write $\hat \bbeta(u) - \bbeta_\lambda(u) = \sum_{\ell=1}^\infty \bh_\ell v_\ell(u)$ where $h_{\ell p} = \langle \hat g_p - g_{\lambda,p}, v_\ell \rangle$.  We can then write
\[
\bu_p^\top (\hat \bbeta(u_{ij}) - \bbeta_\lambda(u_{ij} )) v_k(u_{ij}) = \sum_{\ell=1}^\infty \bu_p^\top \bh_\ell v_\ell(u_{ij}) v_k(u_{ij}).
\]
So the difference is given by
\begin{align*}
& \langle \hat \bbeta - \bbeta_\lambda, \bu_p \otimes v_k \rangle
- \frac{ 1}{n} \sum_{i=1}^n \frac{1}{m_i} \sum_{j=1}^{m_i}  \bu_p^\top (\hat \bbeta(u_{ij}) - \bbeta_\lambda(u_{ij} )) v_k(u_{ij})   \\
& =  \bu_p^\top \bh_k
- \frac{ 1}{n} \sum_{i=1}^n \frac{1}{m_i} \sum_{j=1}^{m_i} \sum_{\ell=1}^\infty  \bu_p^\top \bh_\ell v_\ell(u_{ij}) v_k(u_{ij}) \\
& = \sum_{\ell=1}^\infty \bu_p^\top \bh_\ell \left[\langle v_k, v_\ell\rangle - \frac{ 1}{n} \sum_{i=1}^n \frac{1}{m_i} \sum_{j=1}^{m_i} v_\ell(u_{ij}) v_k(u_{ij}) \right].
\end{align*}
Let $\delta \in [0,1]$ be another constant similar, but potentially different from $\alpha$.  We can then apply CS to bound the above by
\begin{align*}
|\langle \hat \bbeta - \bbeta_\lambda, \bu_p \otimes v_k \rangle| &\leq  \left(\sum_{\ell=1}^\infty \frac{(\bu_p^\top \bh_\ell) ^2}{\tau_\ell^\delta} \right)
 \sum_{\ell=1}^\infty \tau_\ell^\delta \left[\langle v_k, v_\ell\rangle - \frac{ 1}{n} \sum_{i=1}^n \frac{1}{m_i} \sum_{j=1}^{m_i} v_\ell(u_{ij}) v_k(u_{ij}) \right]^2 \\
 & =  \| \bu_p^\top(\hat \bbeta - \bbeta_\lambda)\|_\delta^2 \sum_{\ell=1}^\infty \tau_\ell^\delta \left[\langle v_k, v_\ell\rangle - \frac{ 1}{n} \sum_{i=1}^n \frac{1}{m_i} \sum_{j=1}^{m_i} v_\ell(u_{ij}) v_k(u_{ij}) \right]^2.
\end{align*}
To get the asymptotic order of the summation term above, by Markov's inequality, it is enough to bound its expected value (since it is positive) . Taking the expected value of the summation we get that
\begin{align*}
 & \sum_{\ell=1}^\infty \tau_\ell^\delta \E \left[\langle v_k, v_\ell\rangle - \frac{ 1}{n} \sum_{i=1}^n \frac{1}{m_i} \sum_{j=1}^{m_i} v_\ell(u_{ij}) v_k(u_{ij}) \right]^2 \\
 & =  \sum_{\ell=1}^\infty \frac{\tau_\ell^\delta}{nm} \Var( v_\ell(u_{11}) v_k(u_{11}) )\\
 & \leq \sum_{\ell=1}^\infty \frac{\tau_\ell^\delta}{nm} \int v_\ell(u)^2 v_k(u)^2 \ d \mu (u) 
 \leq \sum_{\ell=1}^\infty \frac{\tau_\ell^\delta}{nm} \sup_u v_k(u)^2 \int v_\ell(u)^2 \ du
 \leq \sum_{\ell=1}^\infty \frac{c_0 \tau_\ell^\delta}{nm \tau_k}. 
\end{align*}
Recall that $\tau_\ell \asymp \ell^{- 2h}$, so the above sum is finite as long as $\delta > 1/2h$.  Putting everything together and applying \eqref{e:eigbound} we have the bound
\[
\| \hat \bbeta - \tilde \bbeta\|^2_\alpha
\leq O_P(1) \| \hat \bbeta - \bbeta_\lambda\|_\delta^2 \frac{c_0}{nm} \sum_k \frac{\tau_k^{1-\alpha}}{(\tau_k + \lambda)^2}
\asymp O_P(1) \| \hat \bbeta - \bbeta_\lambda\|_\delta^2 (nm)^{-1} \lambda^{-\alpha - 1/2h},
\]
which holds for any $0 \leq \alpha < 1-1/2h$ and any $\delta > 1/2h$.

Assume that $\lambda$ is such that $ (nm)^{-1} \lambda^{-\alpha - 1/2h} \to 0$, then it follows that $\| \hat \bbeta - \tilde \bbeta\|^2_\alpha = o_P(\| \hat \bbeta - \bbeta_\lambda\|_\delta^2)$.  A triangle inequality gives
\[
\| \tilde \bbeta - \bbeta_\lambda\|_\delta \geq \| \hat \bbeta - \bbeta_\lambda\|_\delta - \| \hat \bbeta - \tilde \bbeta\|_\delta = (1 + o_P(1)) \| \hat \bbeta - \bbeta_\lambda\|_\delta.
\]
This implies that
\[
 \| \hat \bbeta - \bbeta_\lambda\|_\delta = O_P(\| \tilde \bbeta - \bbeta_\lambda\|_\delta).
\]
Finally, take $\alpha = 0$ and $\delta > 1/2h$ then we have that
\begin{align*}
& \|\hat \bbeta - \tilde \bbeta\|^2 = O_P(1) (nm)^{-1} \lambda^{-1/2h} \| \tilde \bbeta - \bbeta_\lambda\|_\delta^2 \\
& = O_P(1) (nm)^{-1} \lambda^{-1/2h} [(nm)^{-1}\lambda^{-\delta-1/2h}+n^{-1}\lambda^{-\delta}].
\end{align*}
If we assume that $\lambda$ is such that $ (nm)^{-1} \lambda^{-\delta - 1/2h} \to 0$ then the above simplifies to
\[
o_P(1) \lambda^\delta [(nm)^{-1}\lambda^{-\delta-1/2h}+n^{-1}\lambda^{-\delta}]
= o_P(1) [(nm)^{-1}\lambda^{-1/2h}+n^{-1}],
\]
as desired.

Note that in the last paragraph, we made a more explicit assumption about how quickly $\lambda$ tends to zero.  Note that the optimal rate is $\lambda = (nm)^{2h/(1+2h) }$.  For this value of $\lambda$ we have that $ (nm)^{-1} \lambda^{-\alpha - 1/2h} \to 0$ for any value of $\alpha < 1$ since $ 1 + 1/2h = (2h + 1)/2h$.

\end{document}